\newcommand{\menge}[2]{\big\{{#1}~\big |~{#2}\big\}}
\newcommand{\fenv}[1]%
{\ensuremath{\,\overrightarrow{\operatorname{env}}_{#1}}}
\newcommand{\benv}[1]%
{\ensuremath{\,\overleftarrow{\operatorname{env}}_{#1}}}
\newcommand{\scal}[2]{\left\langle{#1},{#2}  \right\rangle}
\newcommand{\RR}{\ensuremath{\mathbb R}}
\newcommand{\RP}{\ensuremath{\mathbb{R}_+}}
\newcommand{\RPP}{\ensuremath{\mathbb{R}_{++}}}
\newcommand{\RX}{\ensuremath{\,\left(-\infty,+\infty\right]}}
\newcommand{\NN}{\ensuremath{\mathbb N}}
\newcommand{\argmin}{\ensuremath{\operatorname*{argmin}}}
\newcommand{\prox}{\ensuremath{\operatorname{Prox}}}
\newcommand{\proj}{\ensuremath{\operatorname{P}}}
\newcommand{\conv}{\ensuremath{\operatorname{conv}}}
\newcommand{\Id}{\ensuremath{\operatorname{Id}}}
\newcommand{\sgn}{\ensuremath{\operatorname{sgn}}}
\newcommand{\bx}{\ensuremath{\mathbf{x}}}
\newcommand{\bX}{\ensuremath{{\mathbf{X}}}}
\newcommand{\by}{\ensuremath{\mathbf{y}}}
\def\namedlabel#1#2{\begingroup
   \def\@currentlabel{#2}%
   \label{#1}\endgroup
}
\def\th@plain{%
  \thm@notefont{}
  \itshape 
}
\def\th@definition{%
  \thm@notefont{}
  \normalfont 
}
\newtheorem{theorem}{Theorem}[section]
\newtheorem{lemma}[theorem]{Lemma}
\theoremstyle{definition}
\newtheorem{definition}[theorem]{Definition}
\theoremstyle{definition}
\theoremstyle{definition}
\theoremstyle{definition}
\theoremstyle{definition}
\newtheorem{example}[theorem]{Example}
\theoremstyle{definition}
\newtheorem{remark}[theorem]{Remark}
\def\proof{\noindent{\it Proof}. \ignorespaces}
\def\endproof{\ensuremath{\hfill \quad \square}}
\def\ox{\overline{x}}
\def\kkk{{k\in\NN}}
\def\disp{\displaystyle}
\def\dd{\delta}
\newcommand{\bee}{\ensuremath{{\mathbf{e}}}}
\newcommand{\wh}[1]{\widehat{#1}}
\newcommand{\matrx}[1]{\left(\begin{matrix}#1\end{matrix}\right)}
\begin{document}
\setlength\parindent{15pt}
\setlength\parskip{6pt}

\title{Optimization of Triangular Networks with Spatial Constraints}

\author{
Valentin R.\ Koch\footnote{
Design and Creation Products (DCP), Autodesk, Inc. Email: \texttt{valentin.koch@autodesk.com}}
~and~
Hung M.\ Phan
\footnote{
Department of Mathematical Sciences, Kennedy College of Sciences, University of Massachusetts Lowell, MA~01854, USA.
E-mail: \texttt{hung\_phan@uml.edu}}
}

\date{April 03, 2019}

\maketitle

\begin{abstract}
A common representation of a three dimensional object in computer applications, such as graphics and design, is in the form of a triangular mesh. In many instances, individual or groups of triangles in such representation need to satisfy spatial constraints that are imposed either by observation from the real world, or by concrete design specifications of the object. As these problems tend to be of large scale, choosing a mathematical optimization approach can be particularly challenging. In this paper, we model various geometric constraints as convex sets in Euclidean spaces, and find the corresponding projections in closed forms. We also present an interesting idea to successfully maneuver around some important nonconvex constraints while still preserving the intrinsic nature of the original design problem. We then use these constructions in modern first-order splitting methods to find optimal solutions.
\end{abstract}

\noindent{\small {\bf AMS Subject Classification:}
  Primary 
  26B25,    
  65D17;    
  Secondary 
  49M27,    
  52A41,
  90C25.}
  
\noindent{\bf Keywords:}
  alignment constraint,
  convex optimization,
  Douglas--Rachford splitting,
  maximum slope,
  minimum slope,
  oriented slope,
  projection methods.

\section{Introduction and Motivation}

The notation in the paper is fairly standard and follows largely \cite{BC2017}. $\RR$ denotes the set of real numbers. By ``$x:=y$", or equivalently ``$y=:x$", we mean that ``$x$ is defined by $y$". The {\em assignment operators} are denoted by ``$\longleftarrow $" and ``$\longrightarrow$". The angle between two vectors $\vec{n}$ and $\vec{q}$ is denoted by $\angle(\vec{n},\vec{q})$. The cross product of $\vec{n}_1$ and $\vec{n}_2$ in $\RR^3$ is denoted by $\vec{n}_1\times\vec{n}_2$.

\subsection{Abstract Problem Formulation}
\label{ss:abs_prob}

Throughout the paper, we assume that $n\in\{3,4,\ldots\}$ and $X=\RR^n$ with standard inner product $\scal{\cdot}{\cdot}$ and induced norm $\|\cdot\|$. Assume that $G=(V_0,E_0)$ is a given {\em triangular mesh} on $\RR^2$ where $V_0$ is the set of vertices and $E_0$ is the set of (undirected) edges, i.e.,
\begin{align*}
V_0&:=\menge{p_i:=(p_{i1},p_{i2})\in\RR^2}
{i\in\{1,2,\ldots,n\}},\\
E_0&\subseteq\menge{ p_ip_j\equiv p_jp_i}{ p_i,p_j\in V_0}.
\end{align*}
From $G$, we can derive the set of triangles of the mesh as follows
\begin{equation*}
T_0:=\menge{\Delta=(p_ip_jp_k)}{\{p_ip_j,p_jp_k,p_kp_i\}\subseteq E_0}.
\end{equation*}
We first aim to
\begin{subequations}\label{e:feas}
\begin{equation}
\text{find a vector}\quad z=(z_1,\ldots,z_n)\in X
\end{equation}
such that the points
\begin{equation}
\{P_i:=(p_{i1},p_{i2},z_i)\}_{i\in\{1,\ldots,n\}}
\quad\text{satisfy a given set of constraints.}
\end{equation}
\end{subequations}
Clearly, the points $\{P_i\}_{i\in\{1,\ldots,n\}}$ also form a corresponding triangular mesh $S$ in three dimensions. Therefore, if there is no confusion, we will also use $E_0,V_0,T_0$ to denote the sets of vertices, edges, and triangles of the three dimensional mesh.

Several types of constraints for triangular meshes are listed below:
\begin{itemize}
\item {\bf interval constraints:} For a given subset $I$ of $\{1,2,\ldots,n\}$, for all $i\in I$, the entries $z_i$ must lie in a given interval of $\RR$.
\item {\bf edge slope constraints:} For a given subset $E$ of the edges $E_0$, and for every edge $P_iP_j\in E$, the slope
\begin{equation*}
s_{ij}:=\frac{z_i-z_j}{ \sqrt{(p_{i1}-p_{j1})^2+(p_{i2}-p_{j2})^2}}
\end{equation*}
must lie in a given subset of $\RR$.

\item {\bf edge alignment constraints:} For a given pair of edges $P_iP_j, P_mP_n \in E_0$, the edges must have the same slope $s_{ij} = s_{mn}$.

\item {\bf surface alignment constraints:} For a given pair of triangles $\Delta_i,\Delta_j \in T_0$ the \emph{normal vectors} $\vec{n}_{\Delta_i}$ and $\vec{n}_{\Delta_j}$ must be parallel.

\item {\bf surface orientation constraints:} For a given subset $T$ of the triangles $T_0$ and for each triangle $\Delta\in T$, there is a given set of vectors $Q_\Delta\subset\RR^3$ such that for each $\vec{q}\in Q_\Delta$, the {\em angle} between the {\em normal vector} $\vec{n}_\Delta$ and $\vec{q}$ must lie in a given subset $\theta_{\vec{q}}$ of $\left[0,\pi\right]$.
\end{itemize}

Suppose there are $J$ constraints imposed on a model. For $j\in\{1,2,\ldots,J\}$, we denote by $C_j$ the set of all points that satisfies the $j$-th constraint. Thus, \eqref{e:feas} can be written in the mathematical form
\begin{equation}\label{e:feas2}
\text{find a point}\quad
x\in C:=\bigcap_{j\in\{1,2,\ldots,J\}}C_j,
\end{equation}
which we refer to as {\em feasibility problem}. Moreover, of the infinitude of possible solutions for \eqref{e:feas2}, one may be particularly interested in those that are {\em optimal} in some sense. For instance, it could be desirable to find a solution that minimizes the slope change between adjacent triangles, a solution that minimizes the volume between the initial triangles and the triangles in the final solution, or variants and combinations thereof. If more than one objective function is of interest, it is common to additively combine these functions, perhaps by scaling the functions based on their different levels of importance. In summary, our goal is to solve the problem
\begin{equation}\label{e:fprob}
\min\quad{F(z)}
\quad\text{subject to}\quad
z\in C:=\bigcap_{j\in\{1,2,\ldots,J\}}C_j,
\end{equation}
where $F$ may be a sum of (scaled) objective functions. The function $F$ is typically nonsmooth which prevents the use of standard optimization methods.
 
\subsection{Computer-Aided Design for Architecture and Civil Engineering Structures}
\label{ss:civil}

The abstract problem formulation in Section~\ref{ss:abs_prob} has some concrete applications in computational surface generation of triangular meshes. In particular, in {Computer-Aided Design} (CAD), triangular meshes are widely used in various engineering disciplines. For example, in architecture and civil engineering, existing and finished ground surfaces are represented by triangulated irregular networks. Slopes are relevant in the context of drainage, in both, civil engineering (transportation structures), and architecture (roof designs), as well as in the context of surface alignments such as solar farms, embankment dams, and airport infrastructure layouts.

A concrete problem that arises in civil engineering design is the grading of a parking lot. Within a given area, the engineer has to define grading slopes such that the parking lot fits within existing structures, the drainage requirements on the lot are met, and safety and comfort is taken into account. Besides these requirements, the engineer would like to change the existing surface as little as possible, in order to save on earthwork costs.

\begin{figure}[!htb]
    \centering
    \begin{subfigure}[b]{0.3\textwidth}
        \includegraphics[width=\textwidth]{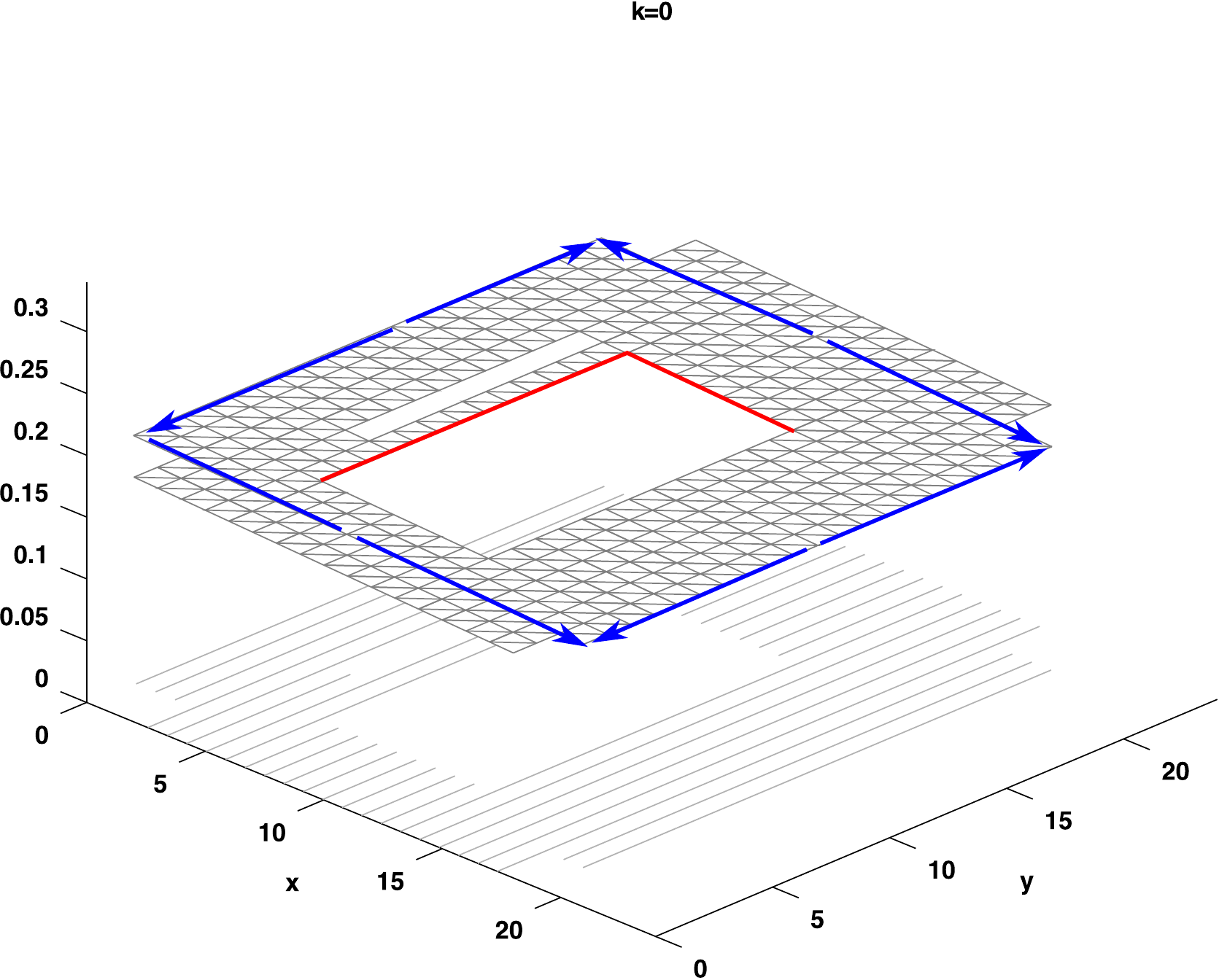}
        \caption{Parking A}
        \label{fig:parkingplanA}
    \end{subfigure}
    ~ 
    \begin{subfigure}[b]{0.3\textwidth}
        \includegraphics[width=\textwidth]{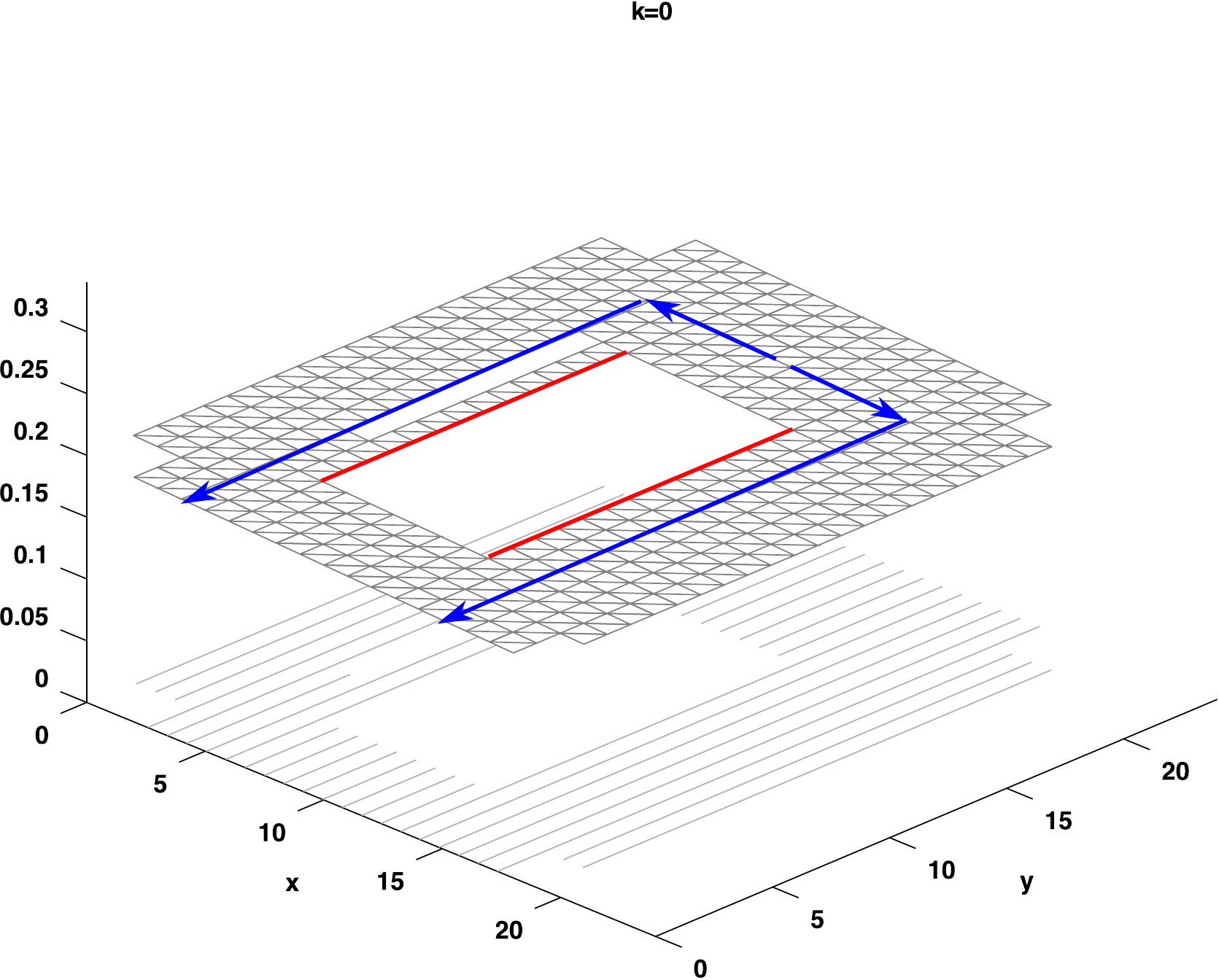}
        \caption{Parking B}
        \label{fig:parkingplanB}
    \end{subfigure}
    ~ 
    \begin{subfigure}[b]{0.3\textwidth}
        \includegraphics[width=\textwidth]{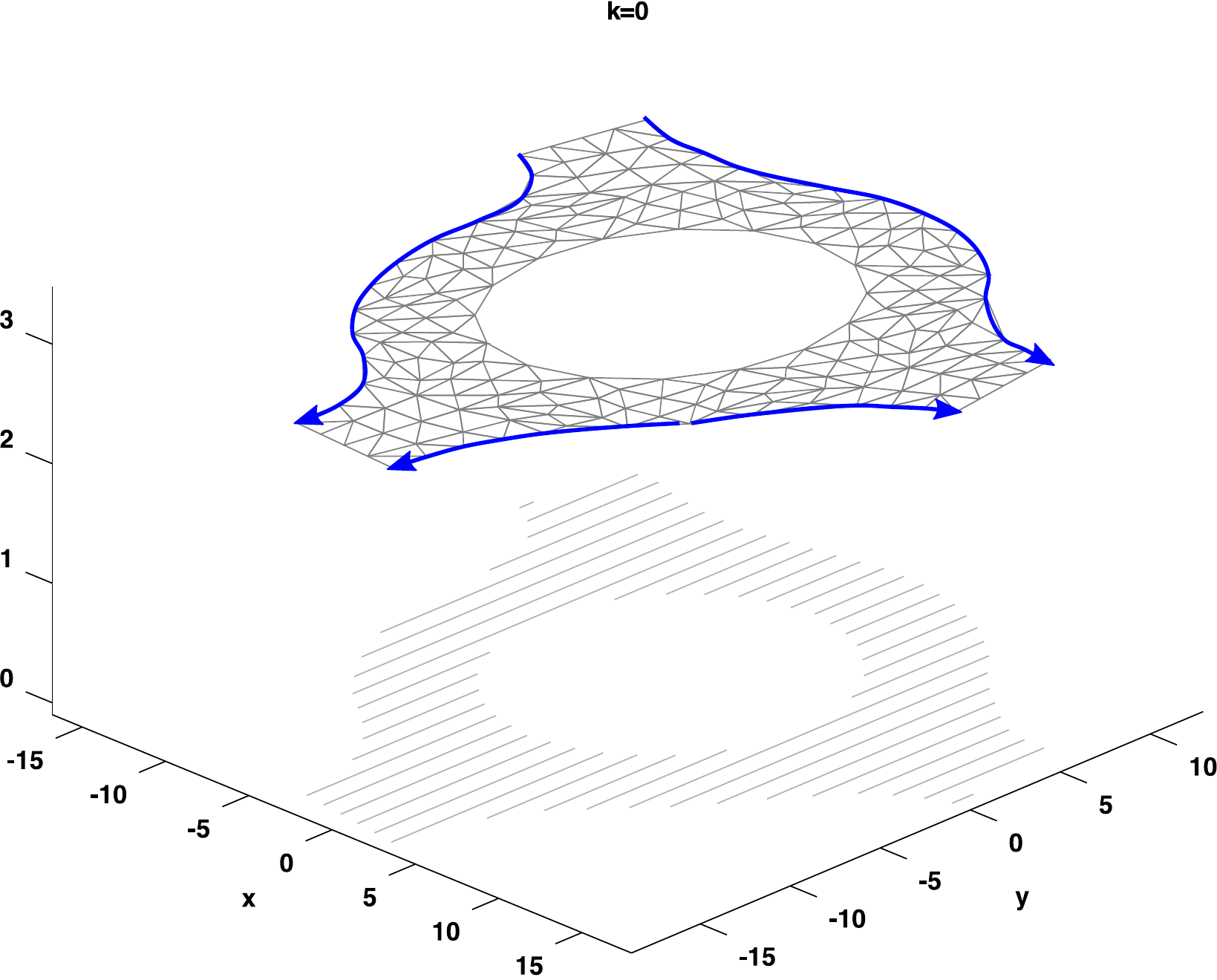}
        \caption{Roundabout}
        \label{fig:roundaboutplan}
    \end{subfigure}
    \caption{Drainage schemes for various engineering structures.}\label{f:parking}
\end{figure}

Consider Figure~\ref{f:parking}. The triangular mesh in Figure~\ref{fig:parkingplanA} represents the existing ground of a planned parking lot. The engineer would like the water to drain away from the building, and along the blue drain lines into the four corners. Red lines indicate where the triangle edges need to be aligned. In Figure~\ref{fig:parkingplanB}, the engineer would like to study a different scheme, where the drainage happens in parallel, on either side of the building. Lastly, Figure~\ref{fig:roundaboutplan} represents the triangular mesh of existing ground for a roundabout, where a minimum slope is required from the inner circle to the outer one, and water needs to drain from the road at the top along the outer circle to the roads on either side at the bottom.

\subsection{Objective and Outline of This Paper}

This paper aims to provide a framework for modeling practical design problems using geometric constraints in three dimensions. These problems can then be solved by iterative optimization methods. This involves the introduction and computation of projection/proximity operators of new constraints and objective functions. Once all required formulas are accomplished, they will be used in iterative optimization methods to obtain the solutions.

The paper is organized as follows.  Section~\ref{s:overvw} contains an overview of iterative methods that will be employed. From Section~\ref{s:linconstr} to Section~\ref{s:prj_minsl}, we derive the projection operators of the above spatial constraints in closed forms. Particularly in Section~\ref{s:prj_minsl}, we present an interesting idea to {\em modify} certain nonconvex constraints into convex ones that retain the intrinsic nature of the original design problem. To the best of our knowledge, this is the {\em first} methodology to successfully maneuver around such nonconvexity. Utilizing these constructions, we include in Section~\ref{s:optim} some optimization problems of interest. Finally, we present the numerical experiment in Section~\ref{s:experiment} and some remarks in Section~\ref{s:conclus}.

\section{Methods Overview}
\label{s:overvw}

\subsection{Projections onto Constraint Sets}

A constraint set $C\subseteq X$ is the collection of all feasible data points, i.e., points that satisfy some requirements. Suppose the given data point $z\in X$ is not feasible (i.e., $z\not\in C$), we aim to {\em modify} $z$ so that the newly obtained point $x$ is feasible (i.e., $x\in C$); and we would like to do it with {\em minimal} adjustment on $z$. This task can be achieved by using the projection onto $C$. Recall that the projection of $z$ onto $C$, denoted by $\proj_Cz$, is the solution of the optimization problem
\begin{equation*}
\proj_C z =\argmin_{x\in C}\|z-x\|=
\menge{x\in C}{\|z-x\|=\min_{y\in C} \|z-y\|}.
\end{equation*}
It is well known that if $C$ is nonempty, closed, and {\em convex}\footnote{$C$ is convex if for all $x,y\in C$ and $\lambda\in\left[0,1\right]$, we have $(1-\lambda)x+\lambda y\in C$}, then $\proj_C z$ is singleton, see, for example, \cite[Theorem~2.10]{rusz2006}.

\subsection{Proximity Operators}

Suppose $f:X\to\RX$ is a proper convex lower semicontinuous function\footnote{See, e.g., \cite{rock70} 
and \cite{BC2017} for relevant materials in convex analysis} and $x$ is a given point in $X$. Then it is well known (see~\cite[Section~12.4]{BC2017}) that the function
\begin{equation*}
X\to\RX:y\mapsto f(y)+\tfrac{1}{2}\|x-y\|^2
\end{equation*}
has a unique minimizer, which we will denote by $\prox_f(x)$. The induced operator $\prox_f:X\to X$
is called the {\em proximal mapping} or {\em proximity operator} of $f$ (see~\cite{moreau65}). Note that if $f$ is the {\em indicator function} of a set $C$ (the indicator function $\iota_C$ is defined by $\iota_C(x)=0$ if $x\in C$ and $\iota_C(x)=+\infty$ otherwise), then $\prox_f=\proj_C$. Thus, proximity operators are generalizations of projections.

\subsection{Iterative Methods for Optimization Problems}

Iterative optimization methods are often used for solving \eqref{e:fprob}, which may require the computations of proximity and projection operators for the functions and constraint sets involved.

It turns out that all spatial constraints encountered in our settings are convex and closed. Hence, their projections always exist and are unique. Moreover, we also successfully obtain explicit formulas for these projections. In the coming sections, we will make the formulas as convenient as possible for software implementation. As we will make proximity operators available for several types of objective functions, any iterative optimization methods that utilize proximity operators, for example, \cite{BCH2013,bricenocombettes2011,comb-pesq-08,lion-mercer-79}, can be used to solve the corresponding problems. Let us describe one such method, the Douglas--Rachford (DR) splitting algorithm. The DR algorithm emerged from the field of differential equations \cite{doug-rach-56}, and was later made widely applicable in other areas thanks to the seminal work~\cite{lion-mercer-79}.

To formulate DR algorithm, we first use indicator functions to convert \eqref{e:fprob} into
\begin{equation}
\label{e:fprob2}
\min\quad F(x)+\iota_{C_1}+\iota_{C_2}+\cdots+\iota_{C_J} 
\quad\text{subject to}\quad x\in X.
\end{equation}
So, it suffices to present DR for the following general optimization problem
\begin{equation}
\label{e:fprob2b}
\min\quad \sum_{i=1}^m f_i(x)
\quad\text{subject to}\quad x\in X,
\end{equation}
where each $f_i$ is a proper convex lower semicontinuous function on $X$. The DR operates in the product space
$\bX:=X^{m}$ with inner product
$\scal{\bx}{\by}:=\sum_{i=1}^m\scal{x_i}{y_i}$
for $\bx=(x_i)_{i=1}^m$ and $\by=(y_i)_{i=1}^m$. Set the starting point $\bx_0=(z,\ldots,z)\in\bX$, where $z\in X$. Given $\bx_k=(x_{k,1},\ldots,x_{k,m})\in\bX$, we compute
\begin{subequations}\label{e:dr}
\begin{align}
&\ox_k :=\frac{1}{m}\sum_{i=1}^m x_{k,i},\\
\forall i\in \{1,\ldots,m\}:\quad &x_{k+1,i}:=x_{k,i}-\ox_k+\prox_{\gamma f_i}(2\ox_{k}-x_{k,i}),\\
\text{then update}\quad
&\bx_{k+1}:=(x_{k+1,1},\ldots,x_{k+1,m}).
\end{align}
\end{subequations}
Then the sequence $(\ox_k)_\kkk$ converges to a solution of \eqref{e:fprob2b}. We note that $(\bx_k)_\kkk$ and $(\ox_k)_\kkk$ are referred to as {\em governing} and {\em monitored} sequences, respectively.

{It is worth mentioning that when all $f_i$'s are indicator functions of the constraints (thus, all proximity operators become projections), then \eqref{e:fprob2b} becomes a pure feasibility problem
\begin{equation}\label{e:intersec_prob}
\text{find a point}\quad
x\in C:=\bigcap_{j\in\{1,2,\ldots,J\}}C_j.
\end{equation}
Therefore, all optimization methods that work for \eqref{e:fprob2b} can also be used for \eqref{e:intersec_prob}. Nevertheless, it is worth mentioning that there are many iterative projection methods that are specifically designed for feasibility problems. At the first glance, it might be tempting to solve such problem by finding the projection onto the intersection $C$ directly. However, this is often not possible due to the complexity of $C$. A workaround is to utilize the projection $\proj_{C_j}$ onto each constraint, if the explicit formula is available. Then with an initial point, one can {\em iteratively execute} the projections $P_{C_j}$'s to derive a solution for \eqref{e:intersec_prob}. Let $z_0\in X$ be the initial data point. The following are two simplest instances among iterative projection methods (see \cite{BC2017,cegielskibook2012,cen_zen_par1997} and the references therein)
\begin{itemize}
\item {\bf cyclic projections:} Given $z_k$, we update $z_{k+1}:= T z_k$ where
$T:=\proj_{C_J}\proj_{C_{J-1}}\cdots \proj_{C_2}\proj_{C_1}$.
\item {\bf parallel projections:} Given $z_k$, we update $z_{k+1}:= T z_k$ where
$T:=\frac{1}{J}\big(\proj_{C_1}+\proj_{C_2}+\cdots +\proj_{C_J}\big)$.
\end{itemize}
In addition, when projection methods succeed (see~\cite{cen_chen_com_12} for some interesting examples), they have various attractive features: they are easy to understand, simple for implementation and maintenance, and sometime can be very fast. We refer the readers to \cite{BBsirev93,BK15,cegielskibook2012,cen_zen_par1997} for more comprehensive discussions on projection methods.}

\section{Projections onto Linear Constraints}
\label{s:linconstr}

By linear constraints, we refer to any constraint on the triangular mesh that can be represented by a system of linear equalities and inequalities. Indeed, this class includes several important constraints in design problems. In this section, we will analyze those constraints and their projectors.

\subsection{Interval Constraint}
\label{ss:prj_interp}

Similar to \cite[Section~2.2]{BK15}, we assume that $I$ is a subset of $\{1,2,\ldots,n\}$ and $(l_i)_{i\in I},(u_i)_{i\in I}\in\RR^I$ are given. Define
\begin{equation*}
Y:=\menge{z=(z_1,z_2,\ldots,z_n)\in X}{\forall i\in I:\ l_i\leq z_i\leq u_i}.
\end{equation*}
Then one can readily check that $Y$ is closed and convex. The following explicit formula is for the projection onto $Y$, whose proof is straightforward, thus, omitted.
\begin{align*}
\proj_Y&:X \to X:
(z_1,z_2,\ldots,z_n)\mapsto
(x_1,x_2,\ldots,x_n),\\
&\text{where}\ 
x_i=\begin{cases}
\max\{l_i,\min\{u_i,z_i\}\},& i\in I,\\
z_i,& i\in\{1,2,\ldots,n\}\smallsetminus I.
\end{cases}
\end{align*}

\subsection{Edge Minimum Slope Constraint}
Let $P_i=(p_{i1},p_{i2},z_i)$ and $P_j=(p_{j1},p_{j2},z_j)$.
The designer may require that the (directional) slope from $P_j$ to $P_i$ must be no less than a threshold level $s_{ij}$. More specifically, since the value $p_{i1},p_{i2},p_{j1},p_{j2}$ are fixed, we can write this constraint as
\begin{equation*}
z_{i}-z_{j}\geq \alpha_{ij}
:=s_{ij}\sqrt{(p_{i1}-p_{j1})^2+(p_{i2}-q_{j2})^2},
\end{equation*}
which we will call the {\em edge minimum slope constraint}. This constraint is a linear inequality, thus, convex. The projection formula onto this type of constraint can be derived analogously to \cite[Section~2.3]{BK15}.

\subsection{Low Point Constraint}
\begin{definition}[low point]
A point $P_j=(p_{j1},p_{j2},z_j)$ on the mesh is called a low point if each point $P_i=(p_{i1},p_{i2},z_i)$ connected to $P_j$ satisfies the edge minimum slope constraint
\begin{equation*}
z_i-z_j\geq \alpha_{i}
:=s_{i}\sqrt{(p_{i1}-p_{j1})^2+(p_{i2}-q_{j2})^2},
\end{equation*}
where all $s_{i}\in\RR$ are given.
\end{definition}

We can treat low point constraint as a {\em single} constraint even though it is the intersection of finitely many edge slope constraints. The following result shows how to project onto this constraint. First, we need a simple lemma

\begin{lemma}\label{l:181227c}
Let $a,b\in\RR$ and $k\geq 1$. Assume that
$ka\leq b+(k-1)\max\{a,b\}$. Then $a\leq b$.
\end{lemma}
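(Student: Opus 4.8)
The plan is to argue by a short case split on the sign of $a-b$, turning the desired conclusion into a contradiction in the only nontrivial case. If $a\le b$ there is nothing to prove, so I would suppose instead that $a>b$. Then $\max\{a,b\}=a$ exactly, and the hypothesis $ka\le b+(k-1)\max\{a,b\}$ becomes $ka\le b+(k-1)a$.

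The next step is pure rearrangement of a real inequality: subtracting $(k-1)a$ from both sides gives $ka-(k-1)a\le b$, that is, $a\le b$. This contradicts the standing assumption $a>b$, so the case $a>b$ cannot occur, and therefore $a\le b$, as claimed.

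I do not expect any genuine obstacle here, since the statement is elementary; the proof is essentially one line once the case split is set up. The only thing to be careful about is that the identification $\max\{a,b\}=a$ is used in the regime $a>b$ (the strict inequality coming precisely from negating the conclusion), so that no boundary ambiguity arises. It is also worth remarking that the hypothesis $k\ge 1$ plays no role in this particular implication — the identity $ka-(k-1)a=a$ holds for every real $k$ — but keeping it is harmless and reflects the setting in which the lemma is later invoked.
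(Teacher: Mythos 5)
Your proof is correct and follows essentially the same idea as the paper: resolve $\max\{a,b\}$ and rearrange a linear inequality, the only cosmetic difference being that you argue by contradiction (so $\max\{a,b\}=a$ and only one branch arises), whereas the paper splits into the two cases $ka\leq b+(k-1)a$ and $ka\leq b+(k-1)b$, the latter using $k\geq 1$. Your observation that the hypothesis $k\geq 1$ is not needed in your route is accurate, since assuming $a>b$ pins down the max and the rearrangement $ka-(k-1)a=a$ holds for all real $k$.
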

\begin{proof}
By assumption, we must have either
$ka\leq b+(k-1)a$ or $ka\leq b+(k-1)b$, and any one of them readily implies $a\leq b$.
\end{proof}

\begin{theorem}[projection onto a low point constraint]
\label{t:lwpt}
Let $\alpha_2,
\ldots,\alpha_m\in\RR$. Define the set
\begin{equation*}
E:=\menge{(x_1,\ldots,x_m)\in\RR^m}{\forall i\in\{2,\ldots,m\}:\ x_i-x_1\geq\alpha_i}.
\end{equation*}
Let $z:=(z_1,\ldots,z_m)\in\RR^m$. Let $\dd_1:=z_1$ and let
$\dd_2\leq\dd_3\leq \cdots\leq \dd_m$
be the values $\{z_i-\alpha_i\}_{i\in\{2,\ldots,m\}}$ in nondecreasing order. Let $k$ be the largest number in $\{1,\ldots,m\}$ such that $\dd_k\leq(\dd_1+\cdots+\dd_k)/k$.
Then the projection $x:=(x_1,x_2,\ldots,x_m):=\proj_Ez$ is given by
\begin{equation*}
x_1=(\dd_1+\cdots+\dd_k)/k\quad \text{and}\quad
\forall i\in\{2,\ldots,m\}:\ 
x_i=\max\{x_1,z_i-\alpha_i\}+\alpha_i.
\end{equation*}
\end{theorem}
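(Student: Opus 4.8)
The plan is to characterize $\proj_E z$ via the standard variational inequality for projections onto closed convex sets and then verify that the proposed point $x$ satisfies it. Concretely, since $E$ is a nonempty closed convex set, $x = \proj_E z$ if and only if $x \in E$ and $\scal{z - x}{y - x} \leq 0$ for all $y \in E$. So the first step is to show the candidate point lies in $E$: for $i \in \{2,\ldots,m\}$ we have $x_i - x_1 = \max\{x_1, z_i - \alpha_i\} + \alpha_i - x_1 \geq x_1 + \alpha_i - x_1 = \alpha_i$, which is immediate. The second, and main, step is the inequality.

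For the variational inequality, I would compute the residual vector $z - x$ componentwise. In coordinate $1$, $z_1 - x_1 = \dd_1 - (\dd_1 + \cdots + \dd_k)/k$. In coordinate $i \geq 2$, $z_i - x_i = z_i - \alpha_i - \max\{x_1, z_i - \alpha_i\} = \min\{0, (z_i - \alpha_i) - x_1\} = -\max\{0, x_1 - (z_i-\alpha_i)\}$; writing $\dd_i' := z_i - \alpha_i$ this is $0$ when $\dd_i' \geq x_1$ and $\dd_i' - x_1 < 0$ when $\dd_i' < x_1$. Splitting the index set $\{2,\ldots,m\}$ into $A := \{i : \dd_i' \geq x_1\}$ and $B := \{i : \dd_i' < x_1\}$, we see that the residual is nonzero only on $\{1\} \cup B$, and on each such coordinate it equals $\dd_{(\cdot)} - x_1$ where I use the $\dd$-values (including $\dd_1 = z_1$). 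A key combinatorial observation, which needs to be nailed down using the maximality of $k$ and Lemma~\ref{l:181227c}, is that $B$ consists exactly of the indices corresponding to $\dd_2, \ldots, \dd_k$ (in the sorted order), so that $x_1 = (\dd_1 + \cdots + \dd_k)/k$ is precisely the average of $\{\dd_j : j \in \{1\}\cup B\}$, whence $\sum_{j \in \{1\}\cup B}(\dd_j - x_1) = 0$. This last identity says the residual vector is orthogonal to the all-ones vector on its support.

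With this structure in hand, the variational inequality follows: for $y = (y_1,\ldots,y_m) \in E$,
\begin{equation*}
\scal{z-x}{y-x} = (z_1 - x_1)(y_1 - x_1) + \sum_{i \in B}(\dd_i' - x_1)(y_i - x_i).
\end{equation*}
Since $i \in B$ gives $x_i = x_1 + \alpha_i$ and $\dd_i' - x_1 < 0$, and since $y \in E$ forces $y_i - x_i \geq \alpha_i - \alpha_i + (y_i - (x_1+\alpha_i)) \geq \alpha_i - \alpha_i = $ well, more directly $y_i \geq y_1 + \alpha_i$, I would substitute $y_i - x_i \geq (y_1 + \alpha_i) - (x_1 + \alpha_i) = y_1 - x_1$. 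Because $\dd_i' - x_1 \leq 0$, multiplying reverses: $(\dd_i' - x_1)(y_i - x_i) \leq (\dd_i' - x_1)(y_1 - x_1)$. Summing and using $\sum_{j\in\{1\}\cup B}(\dd_j - x_1) = 0$ (with $\dd_1 = z_1$), the whole expression is bounded above by $\big((z_1 - x_1) + \sum_{i\in B}(\dd_i' - x_1)\big)(y_1 - x_1) = 0$. This completes the proof.

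The main obstacle is the combinatorial step identifying $B$ with the first $k-1$ of the sorted values $\dd_2,\ldots,\dd_k$ and confirming that $x_1$ is both well defined (i.e.\ $k$ exists — note $k=1$ always works since $\dd_1 \leq \dd_1$) and equal to the average over the support. This is where Lemma~\ref{l:181227c} does its work: one shows that $\dd_j \leq x_1$ for $j \leq k$ and $\dd_j > x_1$ for $j > k$ by comparing partial averages, using that $k$ is the \emph{largest} index with $\dd_k \leq (\dd_1+\cdots+\dd_k)/k$ together with the monotonicity $\dd_2 \leq \cdots \leq \dd_m$. Everything else is bookkeeping with the explicit componentwise formulas.
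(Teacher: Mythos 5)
Your proposal is correct, but it follows a genuinely different route from the paper. The paper \emph{derives} the formula: it reformulates the projection as a quadratic program in the shifted variables $y_i=x_i-\alpha_i$, writes down the Lagrange (KKT) system, extracts the structural identities $y_i=\max\{y_1,\dd_i\}$ and $y_1+\cdots+y_m=\dd_1+\cdots+\dd_m$, and then uses Lemma~\ref{l:181227c} in a two-case analysis ($k=m$ versus $k<m$) to pin down $y_1=(\dd_1+\cdots+\dd_k)/k$. You instead \emph{verify} the stated candidate directly through the variational characterization $x\in E$ and $\scal{z-x}{y-x}\leq 0$ for all $y\in E$, which avoids invoking multipliers altogether; the only nontrivial input on your route is that $\dd_j\leq x_1$ for $2\leq j\leq k$ (immediate from the definition of $k$ and the ordering) and $\dd_j\geq x_1$ for $j>k$ (from maximality of $k$: $\dd_{k+1}>(\dd_1+\cdots+\dd_{k+1})/(k+1)$ rearranges to $\dd_{k+1}>(\dd_1+\cdots+\dd_k)/k=x_1$), so Lemma~\ref{l:181227c} is not actually needed, contrary to what you suggest. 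Two small touch-ups: your set $B$ need not be exactly the indices of $\dd_2,\ldots,\dd_k$ when some $\dd_j$ equals $x_1$ (such indices fall in $A$), but the identity $\sum_{j\in\{1\}\cup B}(\dd_j-x_1)=0$ survives since the omitted terms vanish; and the claim ``$\dd_j\leq x_1$ for $j\leq k$'' can fail at $j=1$ (take $\dd_1$ large), but this is harmless because coordinate $1$ is carried in the support regardless of sign. In short, the paper's KKT computation shows where the formula comes from, while your argument is the shorter certificate that the formula is correct once stated.
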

\proof
First, $x$ is the solution of
\begin{subequations}\label{e:161002a}
\begin{align}
\min\quad &(x_1-z_1)^2+\ldots+(x_m-z_m)^2\\
\text{s.t}\quad& x_1-x_i+\alpha_i\leq 0,\quad i\in\{2,\ldots,m\}.
\end{align}
\end{subequations}
Let $y:=(y_1,\ldots,y_m)$ where $y_1=x_1$ and $y_i:=x_i-\alpha_i$ for $i\in\{2,\ldots,m\}$. Without relabeling, we may assume $\dd_i=z_i-\alpha_i$ for all $i\in\{2,\ldots,m\}$. Then \eqref{e:161002a} becomes
\begin{subequations}\label{e:161002b}
\begin{align}
\min\quad &\varphi(y):=(y_1-\dd_1)^2+\ldots+(y_m-\dd_m)^2\\
\text{s.t}\quad& g_i(y):=y_1-y_i\leq 0,\quad i\in\{2,\ldots,m\}.
\end{align}
\end{subequations}
To find the (unique) solution, we use Lagrange multipliers: there exist $\lambda_2,\ldots,\lambda_m$, such that
\begin{subequations}\label{e:161002c}
\begin{align}
(1/2)\nabla \varphi(y)+\lambda_2\nabla g_2(y)+\ldots + \lambda_m\nabla g_m(y)&=0,\label{e:161002ca}\\
\forall i\in\{2,\ldots,m\}:\quad
\lambda_ig_i(y) =0,\quad
\lambda_i\geq 0,\quad
g_i(y)&\leq 0.\label{e:161002cb}
\end{align}
\end{subequations}
Then \eqref{e:161002ca} implies
\begin{align*}
(y_1-\dd_1)+\lambda_2+\cdots+\lambda_m&=0\\
\forall i\in\{2,\ldots,m\}:\quad (y_i-\dd_i)-\lambda_i &=0.
\end{align*}
So $y_1\leq \dd_1$ because $\lambda_2,\ldots,\lambda_m\geq 0$. By substitution, we get
\begin{equation}\label{e:161002d}
y_1+y_2+\ldots+y_m=\dd_1+\dd_2+\ldots +\dd_m.
\end{equation}
Also, \eqref{e:161002cb} reads as, for each $i\in\{2,\ldots,m\}$,
\begin{equation*}
0=\lambda_i g_i(y)=(y_i-\delta_i)(y_1-y_i),\quad
y_i-\delta_i\geq 0,\quad
\text{and}\quad
y_1-y_i\leq 0.
\end{equation*}
It follows that either $y_i=\delta_i\geq y_1$ or $y_1=y_i\geq\delta_i$, i.e.,
\begin{equation}\label{e:170107a}
\forall i\in\{2,\ldots,m\}:\quad y_i=\max\{y_1,\dd_i\}.
\end{equation}
Substituting \eqref{e:170107a} into \eqref{e:161002d} yields
\begin{equation}\label{e:161002e}
\dd_1+\cdots +\dd_m=
y_1+\max\{y_1,\dd_2\}+\cdots+\max\{y_1,\dd_m\}.
\end{equation}
So for all $j\in\{1,\ldots,m\}$, \eqref{e:161002e} implies
\begin{equation}\label{e:181227a}
\dd_1+\cdots +\dd_m\geq
\underbrace{y_1+\cdots+y_1}_{\text{$j$ terms}}+\dd_{j+1}+\cdots+\dd_m
\quad\Longrightarrow\quad
y_1\leq \frac{\dd_1+\cdots+\dd_j}{j}.
\end{equation}
Next, let $k$ be the largest number in $\{1,\ldots,m\}$ that satisfies
\begin{equation}\label{e:161002g}
\dd_k\leq\frac{\dd_1+\cdots+\dd_k}{k}.
\end{equation}
The number $k$ is well defined since at least \eqref{e:161002g} is true for $k=1$. Now we claim that
\begin{equation}\label{e:181227d}
y_1=\frac{\dd_1+\cdots+\dd_k}{k}
\end{equation}
by considering two cases.

{\it Case~1:} $k=m$. Using \eqref{e:161002g}, \eqref{e:161002e}, and $\dd_2\leq\dd_3\leq\cdots\leq\dd_m$, we have
\begin{align*}
m\dd_m&\leq\dd_1+\cdots+\dd_m
=y_1+\max\{y_1,\dd_2\}+\cdots+\max\{y_1,\dd_m\}\\
&\leq y_1+(m-1)\max\{y_1,\dd_m\}.
\end{align*}
Then Lemma~\ref{l:181227c} implies $\dd_m\leq y_1$. Using this in \eqref{e:161002e}, we derive $\dd_1+\cdots+\dd_m=my_1$ which is \eqref{e:181227d}.

{\it Case~2:} $k<m$. By the choice of $k$, we have $\frac{\dd_1+\cdots+\dd_{k+1}}{k+1}<\dd_{k+1}$, which implies
\begin{equation*}\label{e:181227b}
\frac{\dd_1+\cdots+\dd_{k}}{k}<\dd_{k+1}.
\end{equation*}
Combining with \eqref{e:181227a}, we conclude that $y_1<\dd_{k+1}$. Using $y_1<\dd_{k+1}$ and \eqref{e:161002g} in \eqref{e:161002e}, we obtain
\begin{align*}
k\dd_k+\dd_{k+1}+\cdots+\dd_m
&\leq(\dd_1+\cdots +\dd_k)+\dd_{k+1}+\cdots+\dd_m\\
&= y_1+\max\{y_1,\dd_2\}+\cdots+\max\{y_1,\dd_m\}\\
&= y_1+\underbrace{\max\{y_1,\dd_2\}+\cdots+\max\{y_1,\dd_k\}}_{\text{$k-1$ terms}}
+\dd_{k+1}+\cdots+\dd_m\\
&\leq y_1+(k-1)\max\{y_1,\dd_k\}+\dd_{k+1}+\cdots+\dd_m.
\end{align*}
This implies
\begin{equation*}
k\dd_k\leq y_1+(k-1)\max\{y_1,\dd_k\}.
\end{equation*}
Again, Lemma~\ref{l:181227c} implies $\dd_k\leq y_1$. Now using $\dd_k\leq y_1<\dd_{k+1}$ in \eqref{e:161002e}, we have
\begin{equation*}
\dd_1+\cdots +\dd_m=ky_1+\dd_{k+1}+\cdots+\dd_m,
\quad\text{which implies \eqref{e:181227d}}.
\end{equation*} 

So, \eqref{e:181227d} is true. Finally, we compute $y_i$'s from \eqref{e:170107a} and \eqref{e:181227d}, then use them to derive $x_i$'s.
\endproof

\subsection{Edge Alignment Constraint}

On the triangular mesh, the designer may want a {\em constant slope} on a particular path, in which case we say the path is ``aligned". Such a path is sometimes called a {\em feature line}. To formulate this constraint, suppose the feature line is given by adjacent points $A_1,A_2,\ldots,A_m$ on the triangular mesh where $A_i=(a_{i1},a_{i2},x_i)\in\RR^3$. For $A_iA_{i+1}$, the length of its ``shadow'' on the $xy$-plane is the euclidean distance
\begin{equation}\label{e:deltai}
\dd_i:=\|(a_{i+1,1},a_{i+1,2})-(a_{i,1},a_{i,2})\|=\sqrt{(a_{i+1,1}-a_{i,1})^2+(a_{i+1,2}-a_{i,2})^2}.
\end{equation}
Define also
\begin{equation}\label{e:ti}
t_1:=0\ ,\ t_2:=\dd_1\ ,\ 
t_3:=\dd_1+\dd_2\ ,\ \ldots\ ,\ t_m:=\dd_1+\cdots+\dd_{m-1}.
\end{equation}
Then the alignment constraint is written as
\begin{equation}\label{e:algn}
\forall i\in\{1,\ldots,\,m-2\}:\ \
(x_{i+1}-x_{i})/(t_{i+1}-t_i)
=
(x_{i+2}-x_{i+1})/(t_{i+2}-t_{i+1}).
\end{equation}

\begin{theorem}[projection onto an edge alignment constraint]
\label{t:algnRm}
Suppose the points $(a_{i1},a_{i2})\in\RR^2$ with ${i\in\{1,\ldots,m\}}$, forms a path in $\RR^2$. Let $\delta_i$ and $t_i$ be given respectively by \eqref{e:deltai} and \eqref{e:ti}. Let $F$ be the set of points $(x_1,\ldots,x_m)\in\RR^m$ such that \eqref{e:algn} is satisfied.

Let $z=(z_1,\ldots,z_m)\in\RR^m$.
Then the projection $\proj_F z\in\RR^m$ is given by
\begin{equation*}
\forall i\in\{1,\ldots,m\}:\ \left(\proj_F z\right)_i=f(t_i)=\alpha+\beta t_i.
\end{equation*}
where $f(t)=\alpha + \beta t$ is the least squares line for the points $(t_i,z_i)\in\RR^2$, ${i\in\{1,\ldots,m\}}$. 
\end{theorem}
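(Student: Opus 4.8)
The plan is to recognize $F$ as a two-dimensional linear subspace of $\RR^m$ and then observe that Euclidean projection onto it is precisely least-squares line fitting. First I would rewrite the alignment constraint \eqref{e:algn}. Since the points $(a_{i1},a_{i2})$ form a genuine path, consecutive points are distinct, so $\delta_i>0$ for every $i$ and hence $t_1<t_2<\cdots<t_m$; in particular every denominator in \eqref{e:algn} is positive. For $i\in\{1,\ldots,m-1\}$ put $\beta_i:=(x_{i+1}-x_i)/(t_{i+1}-t_i)$. Then \eqref{e:algn} says exactly that $\beta_1=\beta_2=\cdots=\beta_{m-1}=:\beta$, and telescoping $x_{i+1}-x_i=\beta(t_{i+1}-t_i)$ gives $x_i=x_1+\beta(t_i-t_1)$ for all $i$. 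Setting $\alpha:=x_1-\beta t_1$, this reads $x_i=\alpha+\beta t_i$; conversely any vector of this form plainly lies in $F$. Hence
\[
F=\menge{(\alpha+\beta t_1,\ldots,\alpha+\beta t_m)\in\RR^m}{\alpha,\beta\in\RR},
\]
which is the column space of the $m\times 2$ matrix $A:=[\,\mathbf{1}\ \ \mathbf{t}\,]$ with $\mathbf{1}=(1,\ldots,1)^\top$ and $\mathbf{t}=(t_1,\ldots,t_m)^\top$; it is a nonempty closed convex subset of $\RR^m$, so $\proj_F z$ is a singleton.

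Next I would compute it. For $x=(\alpha+\beta t_i)_{i=1}^m\in F$ we have
\[
\|z-x\|^2=\sum_{i=1}^m\big(z_i-\alpha-\beta t_i\big)^2,
\]
which is exactly the sum of squared residuals for fitting the line $t\mapsto\alpha+\beta t$ to the data $\{(t_i,z_i)\}_{i=1}^m$. Because $t_1<\cdots<t_m$ the $t_i$ are not all equal, so (for $m\ge 2$) $A$ has full column rank and the map $(\alpha,\beta)\mapsto(\alpha+\beta t_i)_i$ is injective. Therefore minimizing $\|z-x\|^2$ over $x\in F$ is equivalent to minimizing the least-squares objective over $(\alpha,\beta)\in\RR^2$, and the unique minimizer $(\alpha,\beta)$ yields the least-squares line $f(t)=\alpha+\beta t$ (equivalently, $(\alpha,\beta)^\top$ is the unique solution of the normal equations $A^\top A\,(\alpha,\beta)^\top=A^\top z$). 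Reading off coordinates gives $(\proj_F z)_i=f(t_i)=\alpha+\beta t_i$, as claimed.

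The only points that require genuine care are the two non-degeneracy observations used above: that $\delta_i>0$ for all $i$ (so the slope expressions in \eqref{e:algn} make sense) and that the $t_i$ are not all equal (so the least-squares line, equivalently the parametrization of $F$ by $(\alpha,\beta)$, is unique); both follow from the hypothesis that the $(a_{i1},a_{i2})$ form a path with $m\ge 2$. Everything else — the telescoping identification of $F$ and the standard identification of orthogonal projection onto a subspace with least squares — is routine. For $m\le 2$ the constraint \eqref{e:algn} is vacuous, $F=\RR^m$, and the statement degenerates harmlessly ($\proj_F$ is the identity, and $f(t_i)=z_i$).
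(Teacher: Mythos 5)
Your proposal is correct and follows essentially the same route as the paper: identify $F$ with the set of vectors of the form $(\alpha+\beta t_i)_i$ and observe that minimizing $\|x-z\|^2$ over $F$ is exactly the least-squares line fit for the data $(t_i,z_i)$. You simply spell out details the paper leaves implicit (the parametrization of $F$, $\delta_i>0$, full column rank, and the degenerate small-$m$ cases), which is fine but not a different argument.
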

\proof
First, $F$ is convex since all constraints in \eqref{e:algn} are linear. Next, we consider the points $(t_i,z_i)$ in $\RR^2$. The problem is to find $(x_1,\ldots,x_m)$ such that the points $(t_i,x_i)$ are aligned and
\begin{equation*}
\|x-z\|^2=\sum_{i=1}^m(x_i-z_i)^2
\quad\text{is minimized}.
\end{equation*}
This is the {\em least squares} problem for the points $(t_i,z_i)$. The proof is complete.
\endproof

\subsection{Surface Alignment Constraint}
\label{ss:prj_surf_algn}

The designer may want to patch several adjacent triangles on the mesh into a single polygon, in which case we say that these triangles are ``aligned". This is equivalent to requiring all vertices of the triangles to lie on the same plane. So we have the following result.

\begin{theorem}[projection onto a surface alignment constraint] Let $(a_{i1},a_{i2})$, $i\in\{1,\ldots,m\}$ be a collection of points in $\RR^2$ that are not on the same line. Let $F$ be the set of all points $(x_1,\ldots,x_m)\in\RR^{m}$ such that the points $\{(a_{i1},a_{i2},x_i)\}_{i\in\{1,\ldots,m\}}$
lie on the same plane in $\RR^3$.
Let $z=(z_1,\ldots,z_m)\in\RR^{m}$. Then the projection $\proj_Fz\in\RR^m$ is given by
\begin{equation*}
\forall i\in\{1,\ldots,m\}:\quad \big(\proj_F z\big)_i=f(a_{i1},a_{i2})=\alpha+\beta a_{i1}+\gamma a_{i2},
\end{equation*}
where $f(t_1,t_2)=\alpha+\beta t_{1}+\gamma t_{2}$ is the least squares plane for the points $(a_{i1},a_{i2},z_i)\in\RR^3$, ${i\in\{1,\ldots,m\}}$.
\end{theorem}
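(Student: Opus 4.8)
The plan is to reduce the surface alignment constraint to an ordinary least squares problem, exactly as was done for the edge alignment constraint in Theorem~\ref{t:algnRm}, and then invoke the characterization of projection onto a subspace (or affine-type) set. First I would observe that a set of points $\{(a_{i1},a_{i2},x_i)\}_{i\in\{1,\ldots,m\}}$ in $\RR^3$ lies on a common plane if and only if there exist scalars $\alpha,\beta,\gamma$ such that $x_i=\alpha+\beta a_{i1}+\gamma a_{i2}$ for every $i$; here the hypothesis that the planar points $(a_{i1},a_{i2})$ are not all collinear guarantees that the plane can be written as a graph over the $xy$-plane (a vertical plane would force collinearity of the $(a_{i1},a_{i2})$), so no generality is lost by this parametrization. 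Consequently
\begin{equation*}
F=\menge{(x_1,\ldots,x_m)\in\RR^m}{\exi(\alpha,\beta,\gamma)\in\RR^3\ \forall i:\ x_i=\alpha+\beta a_{i1}+\gamma a_{i2}},
\end{equation*}
which exhibits $F$ as the image of $\RR^3$ under the linear map $(\alpha,\beta,\gamma)\mapsto(\alpha+\beta a_{i1}+\gamma a_{i2})_{i=1}^m$; in particular $F$ is a linear subspace of $\RR^m$, hence nonempty, closed, and convex, so $\proj_F z$ is a well-defined singleton.

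Next I would write down the projection as the minimization
\begin{equation*}
\proj_F z=\argmin_{x\in F}\|x-z\|^2=\argmin_{(\alpha,\beta,\gamma)\in\RR^3}\sum_{i=1}^m\big(\alpha+\beta a_{i1}+\gamma a_{i2}-z_i\big)^2,
\end{equation*}
and recognize the right-hand side as precisely the least squares problem for fitting an affine function $f(t_1,t_2)=\alpha+\beta t_1+\gamma t_2$ to the data points $(a_{i1},a_{i2},z_i)\in\RR^3$. Since $F$ is a subspace, the minimizer exists, and I would note it is unique in $x$-space even if the triple $(\alpha,\beta,\gamma)$ were not unique — but in fact the non-collinearity hypothesis makes the linear map above injective (its three ``columns'' $(1,\ldots,1)$, $(a_{i1})_i$, $(a_{i2})_i$ are linearly independent exactly when the $(a_{i1},a_{i2})$ are not collinear), so the least squares coefficients $(\alpha,\beta,\gamma)$ are themselves uniquely determined. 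Then $(\proj_F z)_i=f(a_{i1},a_{i2})=\alpha+\beta a_{i1}+\gamma a_{i2}$ is immediate from the definition of the least squares plane, completing the argument.

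The only genuine subtlety — and the step I would present most carefully — is the equivalence between ``lying on a common plane'' and ``being the graph of an affine function over the $xy$-plane''. A general plane in $\RR^3$ has an equation $p\,u+q\,v+r\,w=c$ with $(p,q,r)\neq\bzero$; if $r\neq0$ one solves for $w$ and gets the affine-graph form, but if $r=0$ the plane is vertical and contains no constraint on $x_i$, forcing all $(a_{i1},a_{i2})$ to satisfy $p\,a_{i1}+q\,a_{i2}=c$, i.e.\ to be collinear, which the hypothesis excludes. Thus under the stated assumption the two descriptions of $F$ coincide, and everything else is the same least-squares reasoning already used in Theorem~\ref{t:algnRm}. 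I do not expect any computational obstacle; the proof is essentially a one-line reduction once this equivalence is noted.
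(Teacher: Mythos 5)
Your proposal is correct and follows essentially the same route as the paper: both reduce the projection to the least squares plane-fitting problem for the points $(a_{i1},a_{i2},z_i)$ and use non-collinearity for uniqueness. You additionally spell out the vertical-plane subtlety (that a plane with no vertical-axis component would force the shadows $(a_{i1},a_{i2})$ to be collinear) and the closedness/convexity of $F$ as a subspace, details the paper's proof leaves implicit.
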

\proof Clearly, $F$ is a convex set. Let $x:=(x_1,\ldots,x_m)=\proj_F z$, then $x$ minimizes
\begin{equation*}
\|x-z\|^2=\sum_{i=1}^{m}|x_i-z_i|^2
\end{equation*}
subject to the constraint that $(a_{i1},a_{i2},x_i)$, ${i\in\{1,\ldots,m\}}$, lie on the same plane in $\RR^3$. Thus, it is equivalent to finding the least squares plane $f:\RR^2\to\RR$ for the points $(a_{i1},a_{i2},z_i)$, ${i\in\{1,\ldots,m\}}$, which has unique solution since these points are not on the same line. The conclusion follows.
\endproof

\section{Projections onto General Surface  Slope Constraints}
\label{s:prj_surf}

Surface slope constraints are any requirement imposed on the {\em slope} of a triangle. In this section, we provide a general setup for projections onto such constraints.

\subsection{Normal Vector and Surface Slope Constraint}
\label{ss:slConstr}

Let $(\bee_1,\bee_2,\bee_3)$ be the standard basis of $\RR^3$. Given three points $P_1=(p_{11},p_{12},h_1)$, $P_2=(p_{21},p_{22},h_2)$, and $P_3=(p_{31},p_{32},h_3)$ in $\RR^3$, the normal vector to the plane $P_1P_2P_3$ is the cross product
\begin{equation}\label{e:nt1t2}
\vec{n}
=\matrx{p_{11}-p_{31}\\ p_{12}-p_{32}\\ h_1-h_3}
\times
\matrx{p_{21}-p_{31}\\ p_{22}-p_{32}\\ h_2-h_3}
=:
\matrx{a_1\\ b_1\\ t_1}
\times
\matrx{a_2\\ b_2\\ t_2}
=\matrx{b_1 t_2-b_2 t_1\\
a_2 t_1-a_1 t_2\\
a_1b_2-a_2b_1},
\end{equation}
where the new variables $a_1,a_2,b_1,b_2,t_1,t_2$ are defined correspondingly, e.g., $a_1:=p_{11}-p_{31}$, etc. Also, we always assume that the ``shadows" of $P_1,P_2,P_3$ on $xy$-plane $(p_{11},p_{12})$, $(p_{21},p_{22})$, $(p_{31},p_{32})$ are not on the same line. Thus, $a_1b_2-a_2b_1\neq 0$.

So we can rescale and use
\begin{equation*}
\vec{n}=\left(\frac{b_1 t_2-b_2 t_1}{a_1b_2-a_2b_1},
-\frac{a_1 t_2-a_2 t_1}{a_1b_2-a_2b_1},1\right).
\end{equation*}
If we define
\begin{equation}\label{e:t12uv}
\matrx{t_1\\ t_2}=:\matrx{a_1 & b_1\\ a_2 & b_2}\matrx{u\\ v}
\quad\iff\quad
\matrx{u\\ v}:=\frac{1}{a_1b_2-a_2b_1}
\matrx{b_2 & -b_1\\ -a_2 & a_1}\matrx{t_1\\ t_2},
\end{equation}
then $\vec{n}=(-u,-v,1)$. Obviously, surface slope constraints depend solely on the normal vector $\vec{n}$. Thus, we can represent a surface slope constraint as
\begin{equation*}
g(u,v)\leq 0.
\end{equation*}
An important case of such constraints is the {\em surface orientation constraint} below.

\begin{definition}[surface orientation constraint]
\label{d:surorc}
Let $\Delta$ be a triangle with normal vector $\vec{n}=(-u,-v,1)\in\RR^3$ as above. Let $\vec{q}=(q_1,q_2,q_3)\in\RR^3\smallsetminus\{0\}$ be a unit vector and $\theta$ be an angle in $[0,\pi]$, the constraint
\begin{equation*}
\angle(\vec{n},\vec{q})\leq \theta,
\quad\text{or equivalently,}\quad
\cos\angle(\vec{n},\vec{q})
\geq
\cos\theta,
\end{equation*}
is called the {\em surface orientation constraint} specified by the pair $(\vec{q},\theta)$.
\end{definition}
In Section~\ref{ss:prj_sl_general}, we develop the general framework for projection onto surface orientation constraint. Then
in Sections~\ref{s:prj_maxsl} and \ref{s:prj_minsl} respectively, we will consider two special surface orientation constraints: the {\em surface maximum slope constraint} and the {\em surface oriented minimum slope constraint}.

\subsection{Projection onto a Surface Slope Constraint}
\label{ss:prj_sl_general}

Consider a single triangle determined by three points $Q_1=(p_{11},p_{12},w_1)$, $Q_2=(p_{21},p_{22},w_2)$, and $Q_3=(p_{31},p_{32},w_3)$ in $\RR^3$. Without loss of generality, we can assume 
\begin{equation*}
w_1+w_2+w_3=0.
\end{equation*} 
Projecting onto a slope constraint is to find the new heights $h_1,h_2,h_3$ that is a solution to the problem
\begin{align*}
\min_{(h_1,h_2,h_3)\in\RR^3}\quad
&\|(h_1,h_2,h_3)-(w_1,w_2,w_3)\|^2\\
\text{subject to}\quad&
\text{the triangle $P_1P_2P_3$ satisfies a given slope constraint,}\\ &\text{where $P_1=(p_{11},p_{12},h_1)$, $P_2=(p_{11},p_{12},h_2)$,
and $P_3=(p_{11},p_{12},h_3)$.}
\end{align*}
Defining $a_i,b_i,t_i,u,v$ as in \eqref{e:nt1t2} and \eqref{e:t12uv}, we have $h_1=a_1u+b_1v+h_3$ and $h_2=a_2u+b_2v+h_3$, i.e.,
\begin{equation}\label{e:h123uv}
\matrx{h_1\\ h_2\\ h_3}=\matrx{a_1 & b_1 & 1\\
a_2 & b_2 & 1\\
0  & 0 & 1}\matrx{u\\ v\\ h_3}.
\end{equation}
Suppose also the slope constraint on the triangle $P_1P_2P_3$ is written as $g(u,v)\leq 0$. Then the projection problem is converted to 
\begin{subequations}\label{e:160716c}
\begin{align}
\min_{(u,v,h_3)\in\RR^3}
\quad&
\phi(u,v,h_3):=
\left\|\matrx{a_1 & b_1 & 1\\
a_2 & b_2 & 1\\
0  & 0 & 1}\matrx{u\\ v\\ h_3}-\matrx{w_1\\ w_2\\ w_3}\right\|^2
\label{e:160716c-a}\\
\text{subject to}\quad&
g(u,v)\leq 0.\label{e:160716c-b}
\end{align}
\end{subequations}
Next, suppose further changing variables is necessary, for instance, $(u,v)$ is replaced by the new variables $(\wh{u},\wh{v})$ under a linear transformation
\begin{equation*}
\matrx{\wh{u}\\ \wh{v}}
:=Q\matrx{u\\ v},
\quad\text{where $Q$ is an invertible matrix}.
\end{equation*}
Then \eqref{e:160716c} is equivalent to
\begin{subequations}\label{e:160716h}
\begin{align}
\min_{(\wh{u},\wh{v},h_3)\in\RR^3}
\quad&
\left\|\matrx{\wh{a}_1 & \wh{b}_1 & 1\\
\wh{a}_2 & \wh{b}_2 & 1\\
0  & 0 & 1}\matrx{\wh{u}\\ \wh{v}\\ h_3}-\matrx{w_1\\ w_2\\ w_3}\right\|^2\\
\text{subject to}\quad&
\wh{g}(\wh{u},\wh{v}):=g(u,v)\leq 0,
\end{align}
\end{subequations}
where $\matrx{
\wh{a}_1 & \wh{b}_1\\ 
\wh{a}_2 & \wh{b}_2}:=\matrx{a_1 & b_1\\
a_2 & b_2}Q^{-1}$. That means we can treat \eqref{e:160716h} as \eqref{e:160716c} with {\em new} coefficients $\wh{a}_i,\wh{b}_i$ and constraint function $\wh{g}$.

Next, we will simplify the model \eqref{e:160716c} further.
Since \eqref{e:160716c-b} does not involve $h_3$, we can convert problem~\eqref{e:160716c} into two variables $(u,v)$ as follows: first, set the derivative of $\phi$ with respect to $h_3$ to zero
\begin{subequations}
\begin{align*}
\nabla_{h_3}\phi
&=2\matrx{1 & 1 & 1}\left[\matrx{a_1 & b_1 & 1\\
a_2 & b_2 & 1\\
0  & 0 & 1}\matrx{u\\ v\\ h_3}-\matrx{w_1\\ w_2\\ w_3}\right]\\
&=2\left[(a_1+a_2)u+(b_1+b_2)v-3h_3-(w_1+w_2+w_3)\right]=0.
\end{align*}
\end{subequations}
Since $w_1+w_2+w_3=0$, we obtain
\begin{equation}\label{e:h3uv}
h_3=-(1/3)\big[(a_1+a_2)u+(b_1+b_2)v\big].
\end{equation}
Substituting \eqref{e:h3uv} into \eqref{e:160716c-a}, we have
\begin{subequations}\label{e:190102a}
\begin{align}
\phi(u,v,h_3)&=\left\|
\matrx{a_1 & a_2 & 1\\
a_2 & b_2 & 1\\
0 & 0 & 1}
\matrx{1 & 0\\
0 & 1\\
-\frac{a_1+a_2}{3}& -\frac{b_1+b_2}{3}}
\matrx{u\\ v}
-\matrx{w_1\\ w_2\\ -(w_1+w_2)}\right\|^2\\
&=:\frac{1}{9}\left[
\frac{1}{2}\matrx{u & v}\matrx{A
& C\\
C & B}\matrx{u\\ v}-\matrx{w_a & w_b}\matrx{u\\ v}+Z\right]
\end{align}
\end{subequations}
where $Z$ is some constant independent of $(u,v)$ and
\begin{subequations}\label{e:ABCwab}
\begin{align}
A&:=2(a_1^2+a_2^2-a_1a_2)>0,\
B:=2(b_1^2+b_2^2-b_1b_2)>0, \label{e:ABCwab-a}\\
C&:=2a_1b_1+2a_2b_2-a_1b_2-a_2b_1,\quad\text{and}
\label{e:ABCwab-b}\\
\matrx{w_a & w_b}&:= 3\matrx{w_1 & w_2}\matrx{a_1 & b_1\\ a_2 & b_2}.
\end{align}
\end{subequations}
Thus, \eqref{e:160716c} is equivalent to the problem
\begin{subequations}\label{e:160716b}
\begin{align}
\min_{(u,v)\in\RR^2}\quad
&\varphi(u,v):=
\frac{1}{2}\matrx{u & v}\matrx{A & C\\ C& B}\matrx{u\\ v}-\matrx{w_a& w_b}\matrx{u\\ v}
\label{e:160716b-a}\\
\text{subject to}\quad
& g(u,v)\leq 0.
\label{e:160716b-b}
\end{align}
\end{subequations}
As long as we can find the solution $(u,v)$, we can obtain $(h_1,h_2,h_3)$ by using \eqref{e:h3uv} and \eqref{e:h123uv}.
In the case new variables $\wh{u},\wh{v}$ are used, we will use the corresponding coefficients $(\wh{a}_i,\wh{b}_i,\wh{u},\wh{v})$ in place of $(a_i,b_i,u,v)$. Finally, we show that $\varphi(u,v)$ is strictly convex.
\begin{lemma}\label{l:phi_scvx}
Suppose $a_1,a_2,b_1,b_2\in\RR$ such that $a_1b_2-a_2b_1\neq 0$.
Define $A,B,C$ by \eqref{e:ABCwab-a}--\eqref{e:ABCwab-b}. Then $\matrx{A & C\\ C& B}\succ 0$. Consequently, the function $\varphi(u,v)$ in problem \eqref{e:160716b} is strictly convex.
\end{lemma}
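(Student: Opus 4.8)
Since $\matrx{A & C\\ C & B}$ is a symmetric $2\times 2$ matrix, the plan is to invoke the elementary criterion that such a matrix is positive definite exactly when $A>0$ and $AB-C^{2}>0$; the ``consequently'' clause is then immediate, because this matrix is precisely the (constant) Hessian of the quadratic $\varphi$ in \eqref{e:160716b}, and a quadratic with positive definite Hessian is strictly convex. The positivity of $A$ costs nothing: from \eqref{e:ABCwab-a} one has $A=2(a_1^{2}-a_1a_2+a_2^{2})=a_1^{2}+a_2^{2}+(a_1-a_2)^{2}\ge 0$, and $A=0$ would force $a_1=a_2=0$, impossible under the standing hypothesis $a_1b_2-a_2b_1\neq 0$. (The same argument gives $B>0$, though we will not need it separately.) So the entire content is the inequality $AB-C^{2}>0$.

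Rather than expand $AB-C^{2}$ blindly, I would exploit the origin of the matrix. Eliminating $h_3$ via \eqref{e:h3uv} in \eqref{e:h123uv} writes $(h_1,h_2,h_3)$ as a linear function of $(u,v)$, so the objective in \eqref{e:190102a} takes the form $\phi(u,v)=\bigl\|N\matrx{u\\ v}-w\bigr\|^{2}$ for a fixed $w\in\RR^{3}$, where
\[
N:=\matrx{a_1 & b_1 & 1\\ a_2 & b_2 & 1\\ 0 & 0 & 1}\matrx{1 & 0\\ 0 & 1\\ -\tfrac{a_1+a_2}{3} & -\tfrac{b_1+b_2}{3}}\in\RR^{3\times 2}.
\]
Comparing the pure quadratic parts of this expression and of \eqref{e:190102a} (using \eqref{e:ABCwab}) yields the identity $\matrx{A & C\\ C& B}=18\,N^{\top}N$, so this matrix is positive definite precisely when $N$ has full column rank $2$. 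Multiplying $N$ out one finds $3N=\matrx{2 & -1\\ -1 & 2\\ -1 & -1}\matrx{a_1 & b_1\\ a_2 & b_2}$; the left factor has rank $2$, so $\operatorname{rank}N=2$ iff $\matrx{a_1 & b_1\\ a_2 & b_2}$ is invertible, i.e.\ iff $a_1b_2-a_2b_1\neq 0$, which is the hypothesis. Hence $\matrx{A & C\\ C& B}\succ 0$ and the stated strict convexity of $\varphi$ follows at once.

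The only real obstacle is bookkeeping, in two small places: verifying the scalar identity $\matrx{A & C\\ C& B}=18\,N^{\top}N$ against the explicit formulas \eqref{e:ABCwab-a}--\eqref{e:ABCwab-b}, and keeping the matrix in \eqref{e:190102a} consistent with \eqref{e:h123uv} (its $(1,2)$ entry should be read as $b_1$). A purely computational alternative is also available: expanding directly, the cross terms collapse and one gets $AB-C^{2}=108\,(a_1b_2-a_2b_1)^{2}>0$; but the linear-algebra route above is shorter and makes transparent why the nondegeneracy condition $a_1b_2-a_2b_1\neq 0$ is exactly what is required.
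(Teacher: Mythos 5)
Your proof is correct and follows essentially the same route as the paper: the paper likewise writes the matrix as the Gram matrix $M^{\top}M$ of the $3\times 2$ matrix appearing in \eqref{e:190102a} and deduces positive definiteness from full column rank, which it gets from $a_1b_2-a_2b_1\neq 0$ (that matrix being an invertible $3\times 3$ factor times a $3\times 2$ factor whose top block is the identity); your factorization $3N=\matrx{2&-1\\-1&2\\-1&-1}\matrx{a_1&b_1\\a_2&b_2}$ is just a more explicit way of seeing the rank claim the paper merely asserts. The only quibble is the constants: against the definitions \eqref{e:ABCwab-a}--\eqref{e:ABCwab-b} one actually has $\matrx{A&C\\C&B}=3N^{\top}N$ and $AB-C^{2}=3(a_1b_2-a_2b_1)^{2}$ (the prefactor $\tfrac{1}{9}$ in \eqref{e:190102a} is itself off, and the paper's proof likewise writes $M^{\top}M$ without the $3$), but since positive definiteness is invariant under multiplication by a positive scalar this affects nothing.
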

\begin{proof}
Note from \eqref{e:190102a} that $\matrx{A & C\\ C&B}=M^TM$ where
$M:=\matrx{a_1 & a_2 & 1\\
a_2 & b_2 & 1\\
0 & 0 & 1}
\matrx{1 & 0\\
0 & 1\\
-\frac{a_1+a_2}{3}& -\frac{b_1+b_2}{3}}$.
Since $a_1b_2-a_2b_1\neq 0$, $M$ has full column rank, which implies $M^TM$ is positive definite. It follows that $\varphi$ is strictly convex.
\end{proof}

\section{Projections onto Surface Maximum Slope Constraints}
\label{s:prj_maxsl}
Adopting the notation in Section~\ref{ss:slConstr}, we let $P_1P_2P_3$ be a triangle in $\RR^3$ with normal vector $\vec{n}=(-u,-v,1)$.
In certain cases, it is required that the angle between $\vec{n}$ and a given direction $\vec{q}$ must not exceed a given threshold. For example, suppose $P_1P_2P_3$ represents the desired ground, that cannot be too steep with respect to gravity, i.e., the slope of the plane $P_1P_2P_3$ must not exceed a threshold $s:=s_{\rm max}\in\RP$. Then the angle between $\vec{n}$ and $\bee_3:=(0,0,1)$ must satisfy $\angle(\vec{n},\bee_3)\leq \tan^{-1}(s)$, which is equivalent to
\begin{subequations}\label{e:maxsl-def2}
\begin{align}
\cos\angle(\vec{n},\bee_3)
=\frac{\scal{\vec{n}}{\bee_3}}{\|\vec{n}\|} \geq\cos(\tan^{-1}(s))=\frac{1}{\sqrt{1+s^2}}\\
\iff\quad
\frac{1}{\sqrt{u^2+v^2+1}}\geq\frac{1}{\sqrt{1+s^2}}
\quad\iff\quad 
u^2+v^2-s^2\leq 0.
\end{align}
\end{subequations}

\begin{definition}[surface maximum slope constraint]
We call \eqref{e:maxsl-def2} the {\em surface maximum slope constraint} with maximum slope $s$. This is a special case of surface orientation constraint where $(\vec{q},\theta)=(\bee_3,\tan^{-1}(s))$ (see~Section~\ref{ss:abs_prob}). 
\end{definition}
Using the general model \eqref{e:160716b}, we convert the projection onto surface maximum slope constraint to
\begin{subequations}\label{e:160716e}
\begin{align}
\min_{(u,v)\in\RR^2}\quad
&\varphi(u,v)
=\frac{1}{2}\matrx{u & v}\matrx{A & C\\ C& B}\matrx{u\\ v}-\matrx{w_a& w_b}\matrx{u\\ v}
\label{e:160716e-a}\\
\text{subject to}\quad
& u^2+v^2-s^2\leq 0,
\label{e:160716e-b}
\end{align}
\end{subequations}
where $A,B,C,w_a,w_b$ are given by \eqref{e:ABCwab}.
Rescaling by
$(u,v,w_a,w_b)
\longleftarrow
\Big(\frac{u}{s},\frac{v}{s},\frac{w_a}{s},\frac{w_b}{s}\Big)$,
we obtain an equivalent problem
\begin{subequations}\label{e:160716d}
\begin{align}
\min_{(u,v)\in\RR^2}\quad
&\varphi(u,v)= 
\frac{1}{2}\matrx{u & v}\matrx{A & C\\ C& B}\matrx{u\\ v}-\matrx{w_a& w_b}\matrx{u\\ v}
\label{e:160716d-a}\\
\text{subject to}\quad
& g(u,v):=u^2+v^2-1\leq 0.
\label{e:160716d-b}
\end{align}
\end{subequations}
This problem can be solved by several ways including numerical methods. For example, \eqref{e:160716d} is a special case of trust region subproblem which can be solved by means of generalized eigenvalue problems \cite{adachi17}.

As this is a projection problem that is needed in iterative optimization methods, it is desirable to have a closed form solution. Thus, in the rest of this section, we will aim to find such solution via Lagrange multipliers, also known as Karush-Kuhn-Tucker (KKT) conditions, see \cite{karush1939,kuhntucker1950} or \cite[Theorem~11.5]{beck14}, and Ferrari's method for quartic equations \cite{irving2013}.

First, due to Lemma~\ref{l:phi_scvx}, \eqref{e:160716d} is the problem of minimizing a strictly convex quadratic function $\varphi(u,v)$ over a closed, bounded, convex set in $\RR^2$. Thus, there exists a unique solution. To solve \eqref{e:160716d}, we start by finding the vertex $(u_0,v_0)$ of $\varphi(u,v)$, which is the unique solution of
\begin{equation*}
\nabla\varphi(u,v)=
\matrx{A & C\\ C& B}\matrx{u\\ v}-\matrx{w_a\\ w_b}=0.
\end{equation*}
Now we check if the vertex $(u_0,v_0)$ is inside or outside the feasible region:

{\em Case~1:} $g(u_0,v_0)\leq 0$ (inside). Then $(u_0,v_0)$ is the solution of \eqref{e:160716d}.

{\it Case~2:} $g(u_0,v_0)>0$ (outside). Then $\nabla\varphi(u,v)\neq 0$ for all $g(u,v)\leq 0$. Observe that for each value $\eta\geq 0$, the level set $\varphi(u,v)=\eta$ is an ellipse in $\RR^2$ whose center is the vertex $(u_0,v_0)$ (see Figure~\ref{fig:maxlvset}).
\begin{figure}[H]
\centering
\includegraphics[height=2.5in]{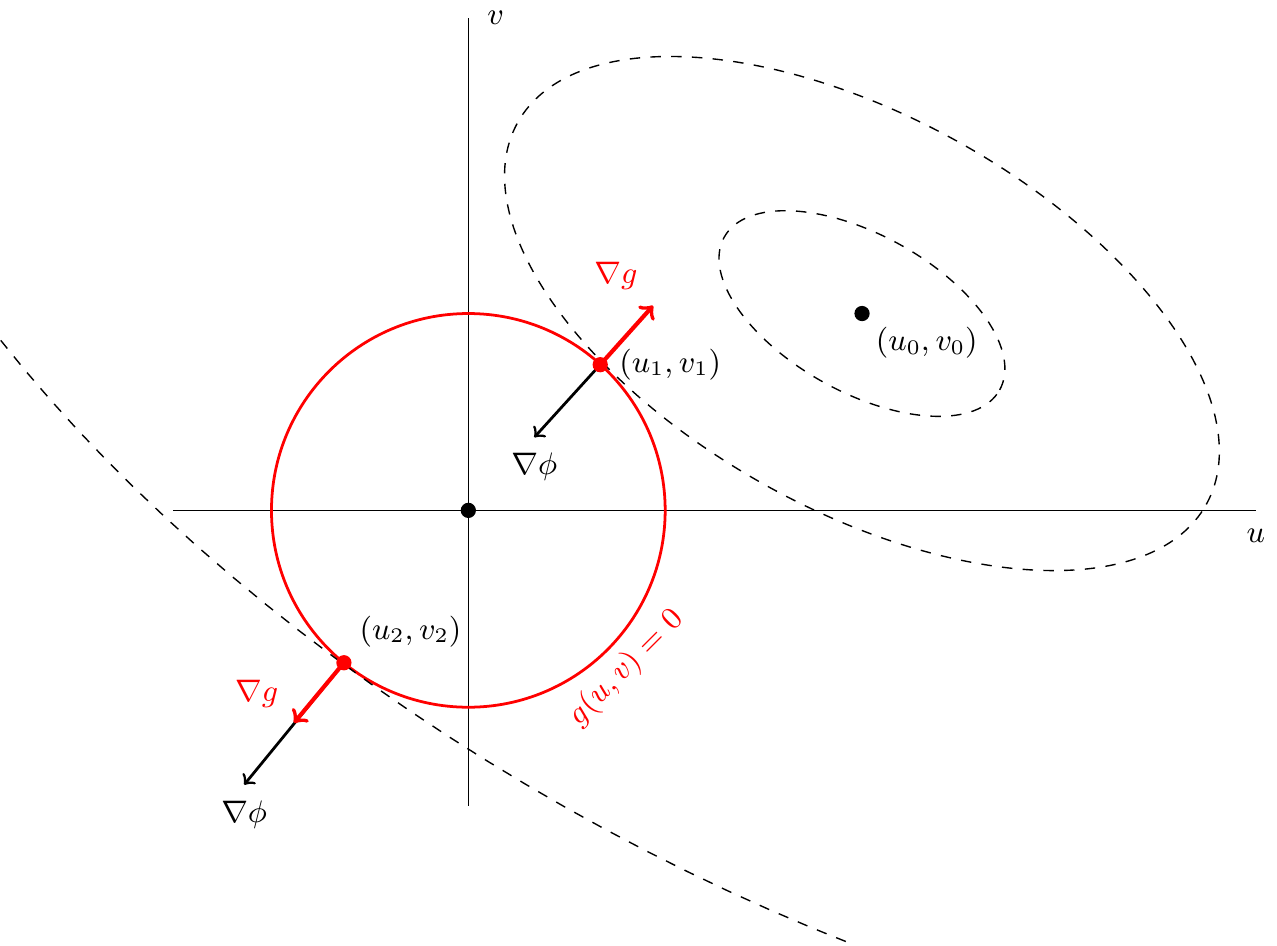}
\caption{Level sets of $\varphi(u,v)$ vs. feasible region.}
\label{fig:maxlvset}
\end{figure}
Hence, the tangent point of the unit circle $g(u,v)=0$ and the {\em smallest} elliptical level set of $\varphi$ that intersects the circle (see Figure~\ref{fig:maxlvset}), denoted by $(u_1,v_1)$, is the unique solution of the minimization problem \eqref{e:160716d}; and any tangent point of the unit circle $g(u,v)=0$ and the {\em largest} elliptical level set of $\varphi$ that intersects the circle (see Figure~\ref{fig:maxlvset}), denoted by $(u_2,v_2)$, is a solution of the maximization problem
\begin{equation*}
\max_{(u,v)\in\RR^2}\quad \varphi(u,v)
\quad\text{subject to}\quad g(u,v)\leq 0.
\end{equation*}
Based on this observation, there exist Lagrange multipliers $\lambda_1\geq 0$ and $\lambda_2\leq 0$ such that $(u_1,v_1,\lambda_1)$ and $(u_2,v_2,\lambda_2)$ satisfy the Lagrange multipliers system
\begin{subequations}\label{e:larg1}
\begin{align}
\nabla \varphi(u,v)+\tfrac{\lambda}{2}\nabla g(u,v)&=0,\label{e:larg1-a}\\
u^2+v^2-1&=0.\label{e:larg1-c}
\end{align}
\end{subequations}
Note that $\nabla\varphi(u_1,v_1)\neq 0$ and $\nabla\varphi(u_2,v_2)\neq 0$, so we must have $\lambda_1>0$ and $\lambda_2<0$. Moreover, since the minimization problem has a unique solution, we therefore conclude that {\em the system \eqref{e:larg1} must possess a unique solution $(u_1,v_1,\lambda_1)$ with $\lambda_1>0$, and at least a solution $(u_2,v_2,\lambda_2)$ with $\lambda_2<0$}.

In summary, {\em Case~2} reduces to finding the unique solution $(u,v,\lambda)$ of \eqref{e:larg1} with $\lambda>0$. First, \eqref{e:larg1-a} yields
\begin{equation}\label{e:uv2}
\matrx{A+\lambda & C\\ C & B+\lambda}\matrx{u\\ v}
=\matrx{w_a\\ w_b}.
\end{equation}
By Lemma~\ref{l:phi_scvx} and the assumption that $\lambda>0$, the matrix in \eqref{e:uv2} is positive definite. It follows that \eqref{e:uv2} has a unique solution
\begin{subequations}\label{e:uvlm}
\begin{align}
u&=\frac{w_a(B+\lambda)-w_bC}{(A+\lambda)(B+\lambda)-C^2}
=\frac{\lambda w_a +w_aB-w_bC}{\lambda^2+(A+B)\lambda+AB-C^2},\\
v&=\frac{w_b(A+\lambda)-w_aC}{(A+\lambda)(B+\lambda)-C^2}
=\frac{\lambda w_b +w_bA-w_aC}{\lambda^2+(A+B)\lambda+AB-C^2},
\end{align}
\end{subequations}
Next, substituting $u$ and $v$ into \eqref{e:larg1-c} yields
\begin{align*}
[\lambda^2+(A+B)\lambda+AB-C^2]^2
&=[w_a\lambda+(w_aB-w_bC)]^2+[w_b\lambda+(w_bA-w_aC)]^2\\
&=
(w_a^2+w_b^2)\lambda^2+2(w_a^2B+w_b^2A-2w_aw_bC)\lambda\\
&\quad +(w_aB-w_bC)^2+(w_bA-w_aC)^2.
\end{align*}
Defining the constants accordingly, we rewrite as
\begin{equation*}
(\lambda^2+R_1\lambda+R_2)^2=R_3\lambda^2+2R_4\lambda+R_5.
\end{equation*}

One can simply solve this equation by the classic Ferrari's method for quartic equations \cite{cardano1968}. Nevertheless, since there is a unique positive $\lambda$, we will find its explicit formula following Ferrari's technique. We introduce a real variable $y$
\begin{subequations}\label{e:qrtlm}
\begin{align}
(\lambda^2+R_1\lambda+R_2+y)^2
&=R_3\lambda^2+2R_4\lambda+R_5
+2(\lambda^2+R_1\lambda+R_2)y+y^2\\
&=(R_3+2y)\lambda^2
+2\big(R_4+R_1y\big)\lambda
+R_5+2R_2y+y^2.
\end{align}
\end{subequations}
We choose $y$ such that the right hand side is a perfect square in $\lambda$. Thus, the right hand side must have zero discriminant
\begin{align*}
&[R_4+R_1y]^2-(R_3+2y)(R_5+2R_2y+y^2)=0
\label{e:ycb-a}\\
\iff\quad &-2y^3+[R_1^2-R_3-4R_2]y^2+[2R_1R_4-2R_2R_3-2R_5]y
+R_4^2-R_3R_5=0.
\end{align*}
This is a cubic equation in $y$, so we use Cardano's method \cite{cardano1968} to find one real solution $y_0$. Then \eqref{e:qrtlm} becomes
\begin{equation}\label{e:qrtlm2}
\big(\lambda^2+R_1\lambda+R_2+y_0\big)^2=
(R_3+2y_0)\bigg(\lambda
+\frac{R_4+R_1y_0}{R_3+2y_0}\bigg)^2.
\end{equation}
As discussed above, this equation must have at least one positive and one negative solutions. Next, we solve for the (unique) positive $\lambda$:

{\em Case~2a:} $R_3+2y_0<0$. Then
$\lambda=-(R_4+R_1y_0)/(R_3+2y_0)$
must be the unique solution, which is a contradiction. Thus, this case cannot happen.

{\em Case~2b:} $R_3+2y_0=0$. Then $y_0=-R_3/2$ and
$\lambda^2+R_1\lambda+R_2+y_0=0$.
So
\begin{equation*}
\lambda
=\tfrac{1}{2}
\Big(-R_1\pm\sqrt{R_1^2-4R_2+2R_3}\Big).
\end{equation*}
Since there is only one positive $\lambda$, we take
$\lambda=
\big(-R_1+\sqrt{R_1^2-4R_2+2R_3}\big)/2$.

\smallskip
{\em Case~2c:} $R_3+2y_0>0$. Define $r:=\sqrt{R_3+2y_0}$.
Then \eqref{e:qrtlm2} becomes
\begin{equation*}
(\lambda^2+R_1\lambda+R_2+y_0)^2
=\Big(\lambda r
+\tfrac{R_4+R_1y_0}{r}\Big)^2.
\end{equation*}
This leads to two quadratic equations in $\lambda$
\begin{equation}\label{e:160925a}
\lambda^2+\left(R_1\pm r\right)\lambda
+\big(R_2+y_0
\pm\tfrac{R_4+R_1y_0}{r}\big)=0.
\end{equation}
Now if $R_4+R_1y_0=0$, then \eqref{e:160925a} consists of two quadratic equations that have constant term $R_2+y_0$. Thus, it yield an even number (possibly none) of positive solutions. Therefore, we must have $R_4+R_1y_0\neq 0$. Moreover, we must take the equation with negative constant term. So we set
$r\ \longleftarrow\
r\cdot\sgn(R_4+R_1y_0)$ and take only the equation
\begin{equation*}
\lambda^2+\left(R_1-r\right)\lambda
+\big(R_2+y_0
-\tfrac{R_4+R_1y_0}{r}\big)=0.
\end{equation*}
It follows that the positive $\lambda$ is
\begin{equation*}
\lambda=\tfrac{1}{2}\Big(r-R_1
+\sqrt{(r-R_1)^2-4\big(R_2+y_0
-\tfrac{R_4+R_1y_0}{r}\big)}\,\Big).
\end{equation*}
Next, we obtain $u$ and $v$ from  \eqref{e:uvlm} and then rescale variables $(u,v)\longleftarrow (su,sv)$. Finally, we obtain $(h_1,h_2,h_3)$ by using \eqref{e:h3uv} and \eqref{e:h123uv}.

\section{Projections onto Surface Oriented Minimum Slope Constraints}
\label{s:prj_minsl}

\subsection{Motivation from a Nonconvex Constraint}

Let $P_1P_2P_3$ be a triangle with the normal vector $\vec{n}=(-u,-v,1)$ as in Section~\ref{ss:slConstr}. In some cases, it is required that the angle $\angle(\vec{n},\vec{q})\geq \alpha$ for some given vector $\vec{q}$ and number $\alpha$. This happens, for example, if $P_1P_2P_3$ must have a slope at least $s:=s_{\rm min}\in\RP$. In this case, the angle between $\vec{n}$ and $\bee_3=(0,0,1)$ satisfies
\begin{subequations}\label{e:drain}
\begin{align}
&\angle{(\vec{n},\bee_3)}\geq \tan^{-1}(s)\label{e:drain-a}\\
\Leftrightarrow\quad
&\cos\angle(\vec{n},\bee_3)
={\scal{\vec{n}}{\bee_3}}/{\|\vec{n}\|} \leq\cos(\tan^{-1}(s))={1}/{\sqrt{1+s^2}}\\
\Leftrightarrow\quad
&\frac{1}{\sqrt{u^2+v^2+1}}\leq\frac{1}{\sqrt{1+s^2}}
\quad\Leftrightarrow\quad u^2+v^2-s^2\geq 0.
\end{align}
\end{subequations}
We refer to \eqref{e:drain} as the {\em surface minimum slope constraint} with minimum slope $s$. This is clearly a {\em nonconvex} constraint in $(u,v)$. Despite the projection onto this constraint is still available, nonconvexity may prevent iterative methods from convergence. It is worth to mention that similar minimum slope constraints are also critical in road design problem \cite{BK15}, which is again nonconvex and thus, somewhat hinders the theoretical analysis. Therefore, we will next present a novel idea to modify this constraint in a way such that minimum slope is preserved.

\subsection{The Surface Oriented Minimum Slope Constraint}

Condition \eqref{e:drain-a} implies the angle between $\vec{n}$ and the $xy$-plane satisfies
\begin{equation}\label{e:drain2}
\angle (\vec{n},\text{$xy$-plane})\leq
({\pi}/{2})-\tan^{-1}(s).
\end{equation} 
In some cases, it is reasonable to align the plane $P_1P_2P_3$ toward a given location/direction. For instance, in civil engineering drainage, the designer may want to direct the water to certain drains. Suppose we want $P_1P_2P_3$ inclined towards a unit direction $\vec{q}=(q_1,q_2,0)\in\RR^3$. Then we can fulfill \eqref{e:drain2} by requiring
$\angle{(\vec{n},\vec{q})}\leq \frac{\pi}{2}-\tan^{-1}(s)$,
i.e.,
\begin{equation*}
\cos\angle{(\vec{n},\vec{q})}
={\scal{\vec{n}}{\vec{q}}}/{\|\vec{n}\|}\geq \cos\big(\tfrac{\pi}{2}-\tan^{-1}(s)\big)={s}/{\sqrt{1+s^2}},
\end{equation*}
Substituting $\vec{n}=(-u,-v,1)$ and $\vec{q}=(q_1,q_2,0)$, we obtain
$\frac{-q_1u-q_2v}{\sqrt{u^2+v^2+1}}\geq \frac{s}{\sqrt{1+s^2}}$,
which is equivalent to
\begin{equation}\label{e:minsl}
q_1u+q_2v<0,\quad
u^2+v^2+1-\tfrac{1+s^2}{s^2}(q_1u+q_2v)^2\leq 0.
\end{equation}
Hence, we arrive at the following definition.
\begin{definition}[surface oriented minimum slope constraint]
\label{d:omsl}
We call \eqref{e:minsl} the {\em surface oriented minimum slope constraint} specified by $(\vec{q},s)$, where $\vec{q}=(q_1,q_2,0)\in\RR^3$ is a unit vector and $s\in\RPP$ is the minimum slope.
\end{definition}

Definition~\ref{d:omsl} is a special case of the {\em surface orientation constraint} in Section~\ref{ss:abs_prob} where $(\vec{q},\theta)=(\vec{q},\tfrac{\pi}{2}-\tan^{-1}(s))$.
It is worth mentioning that the constraint \eqref{e:minsl} not only guarantees surface minimum slope but also generates a {\em convex} feasible set.

\subsection{The Projection onto \eqref{e:minsl}} 

By employing Section~\ref{ss:prj_sl_general}, the projection onto the surface oriented minimum slope constraint is given by the solution to the problem
\begin{align*}
\min_{(u,v)\in\RR^2}\quad
&(a_1u+b_1v+h_3-w_1)^2+(a_2u+b_2v+h_3-w_2)^2+(h_3-w_3)^2\\
\text{subject to}\quad &\eqref{e:minsl}.
\end{align*}
Again, we first simplify this problem. Define $Q:=\matrx{
q_1 & q_2\\ q_2 & -q_1}$ where $(q_1,q_2,0)$ is the unit direction vector that defines the constraint \eqref{e:minsl}. Then $Q^2=\Id$, which implies $Q=Q^T=Q^{-1}$. Next, we change variables
\begin{equation*}
\matrx{
\wh{u}\\ \wh{v}}
:= Q\matrx{u\\ v}
=\matrx{q_1 u+q_2 v\\ q_2 u-q_1 v}
\quad\Leftrightarrow\quad
\matrx{u\\ v}
=Q\matrx{\wh{u}\\ \wh{v}}
=\matrx{q_1 \wh{u}+q_2 \wh{v}\\
q_2 \wh{u}-q_1 \wh{v}}.
\end{equation*}
Then the second inequality in \eqref{e:minsl} becomes
\begin{equation*}
(q_1\wh{u}+q_2\wh{v})^2+(q_2\wh{u}-q_1\wh{v})^2+1
-\Big(1+\frac{1}{s^2}\Big)\wh{u}^2
=\Big(-\frac{1}{s^2}\Big)\wh{u}^2+\wh{v}^2+1
\leq 0.
\end{equation*}
Thus, \eqref{e:minsl} becomes
\begin{equation}\label{e:minsl2}
\wh{u}<0,\quad \wh{v}^2-\frac{\wh{u}^2}{s^2}+1\leq 0.
\end{equation}
Following Section~\ref{ss:prj_sl_general}, we change coefficients (without relabeling)
\begin{equation}\label{e:new_aibi}
\matrx{a_1 & a_2\\ b_1 & b_2}
\longleftarrow Q
\matrx{a_1 & a_2\\ b_1 & b_2},
\end{equation}
and obtain an equivalent problem
\begin{equation}\label{e:160714a}
\min_{(\wh{u},\wh{v})\in\RR^2}\quad
\frac{1}{2}\matrx{\wh{u} & \wh{v}}
\matrx{A & C\\ C & B}\matrx{\wh{u}\\ \wh{v}}
-\matrx{w_a & w_b}\matrx{\wh{u}\\ \wh{v}}
\quad\text{subject to}\quad\eqref{e:minsl2}.
\end{equation}
where $A,B,C,w_a,w_b$ are defined by \eqref{e:ABCwab} with the new coefficients $a_1,a_2,b_1,b_2$ from \eqref{e:new_aibi}. Next, we change variables by
$(u,v,A,B,w_b)
\longleftarrow
\Big(\frac{\wh{u}}{s},\wh{v},sA,\tfrac{B}{s},\frac{w_b}{s}\Big)$,
then \eqref{e:160714a} is equivalent to
\begin{subequations}\label{e:160614c}
	\begin{align}
	\min_{(u,v)\in\RR^2}\   
	\quad&\varphi(u,v)=
    \frac{1}{2}\matrx{u & v}
    \matrx{A & C\\ C & B}\matrx{u\\ v}
    -\matrx{w_a & w_b}\matrx{u\\ v}
	\label{e:160614c-a}\\
	\text{subject to}\quad&
	g_1(u,v):=u<0,\quad
	g_2(u,v):=v^2-u^2+1\leq0.\label{e:160614c-b}
	\end{align}
\end{subequations}
Since all matrices in \eqref{e:new_aibi} are nonsingular, the new coefficients $a_1,a_2,b_1,b_2$ satisfy $a_1b_2-a_2b_1\neq 0$, which implies that $\varphi(u,v)$ is strictly convex by Lemma~\ref{l:phi_scvx}. The feasible set \eqref{e:160614c-b} is the left half of a hyperbola, thus, also a convex region (see Figure~\ref{fig:minlvset}). Therefore, \eqref{e:160614c} is the problem of minimizing a strictly convex quadratic function over a closed convex region, which must yield a unique solution.

Similar to Section~\ref{s:prj_maxsl}, we now will present a way to solve \eqref{e:160614c} by Lagrange multipliers. First, we find the vertex $(u_0,v_0)$ of $\varphi(u,v)$, which is the unique solution of
\begin{equation*}
\nabla\varphi(u,v)=
\matrx{A & C\\ C& B}\matrx{u\\ v}-\matrx{w_a\\ w_b}=0.
\end{equation*}

{\em Case~1:} $(u_0,v_0)$ is feasible, i.e., $u_0<0$ and $g_2(u_0,v_0)\leq0$. Then clearly $(u_0,v_0)$ is the unique solution of \eqref{e:160614c}.

{\em Case~2:} $(u_0,v_0)$ is not feasible. Then the unique solution $(u_1,v_1)$ of \eqref{e:160614c} is the tangent point of the left branch hyperbola curve $C:=\menge{(u,v)}{u<0,\ v^2-u^2+1=0}$ and the smallest level set of $\varphi(u,v)$ that intersects $C$, which is an ellipse centered at $(u_0,v_0)$. Figure~\ref{fig:minlvset} illustrates two possible scenarios.
\begin{figure}[H]
\centering
    \begin{subfigure}{.49\textwidth}
    \centering
    \includegraphics[height=2.5in]{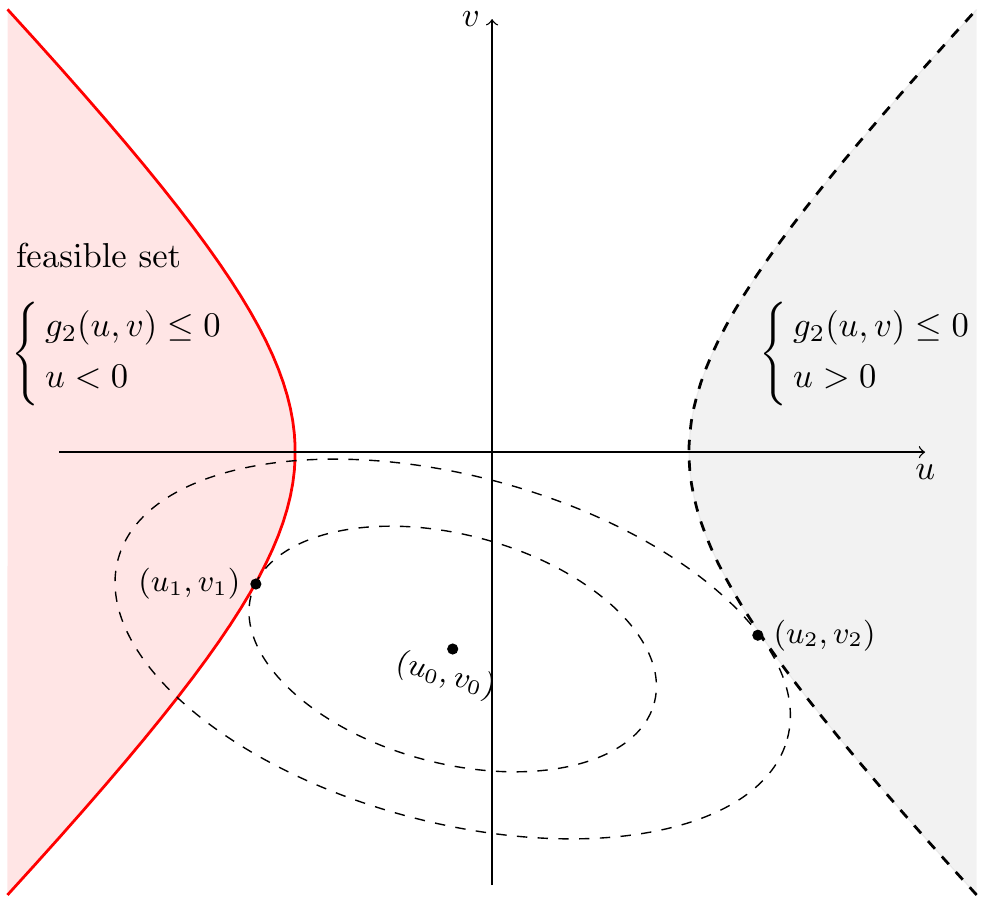}
    \caption{$g_2(u_0,v_0)>0$.}
    \label{fig:minlv_a}
    \end{subfigure}
    \begin{subfigure}{.49\textwidth}
    \centering
    \includegraphics[height=2.5in]{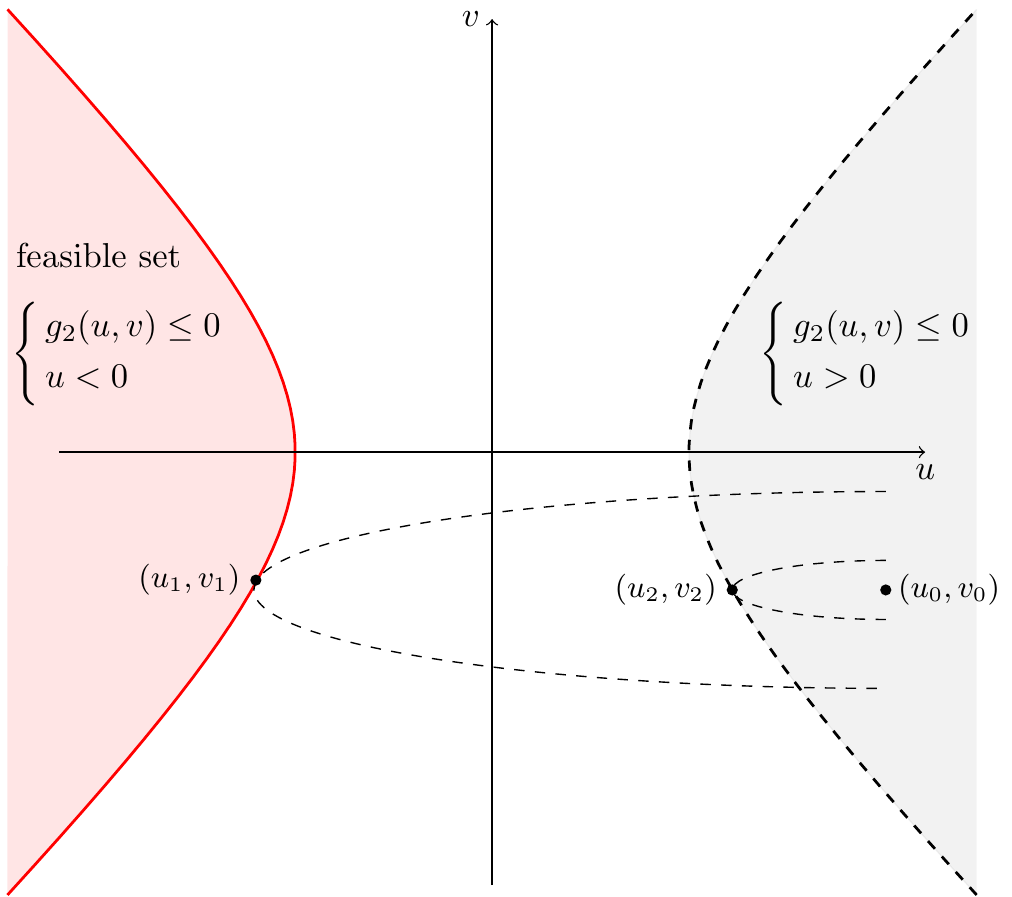}
    \caption{$g_2(u_0,v_0)\leq0$ and $u_0>0$.}
    \label{fig:minlv_b}
    \end{subfigure}
\caption{Level sets of $\varphi(u,v)$ vs. feasible region.}
\label{fig:minlvset}
\end{figure}
In both cases, we see that $g_1(u,v)=u<0$ is always an inactive constraint. Therefore, the Lagrange multipliers system reduces to
\begin{subequations}\label{e:larg2}
\begin{align}
\nabla\varphi(u,v)+\tfrac{\lambda}{2} \nabla g_2(u,v) &=0,\label{e:larg2-ab}\\
g_2(u,v)=v^2-u^2+1&=0.\label{e:larg2-c}
\end{align}
\end{subequations}
Utilizing Figure~\ref{fig:minlvset}, we conclude the following:

If $g_2(u_0,v_0)>0$ (see Figure~\ref{fig:minlv_a}), then \eqref{e:larg2} must have a unique solution $(u_1,v_1,\lambda_1)$ with $\lambda_1>0$ and $u_1<0$, and a unique solution $(u_2,v_2,\lambda_2)$ with $\lambda_2>0$ and $u_2>0$.

If $g_2(u_0,v_0)\leq 0$ and $g_1(u_0,v_0)=u_0>0$ (see Figure~\ref{fig:minlv_b}), then \eqref{e:larg2} must have a unique solution $(u_1,v_1,\lambda_1)$ with $\lambda_1>0$ and $u_1<0$. Also, \eqref{e:larg2} must have at least one solution $(u_2,v_2,\lambda_2)$ with $\lambda_2<0$.

In summary, {\em Case~2} reduces to finding the solution $(u_1,v_1,\lambda_1)$ of \eqref{e:larg2} where $\lambda_1>0$ and $u_1<0$. It then follows that $(u_1,v_1)$ is the unique solution of \eqref{e:160614c}. First, \eqref{e:larg2-ab} is
\begin{equation}\label{e:uv3}
\matrx{A-\lambda & C\\
C & B+\lambda}\matrx{u\\ v}=\matrx{w_a\\ w_b}.
\end{equation}
Define
\begin{subequations}\label{e:DDuDv}
\begin{align*}
D&:=(A-\lambda)(B+\lambda)-C^2
=-\lambda^2+(A-B)\lambda+(AB-C^2),\\
D_u&:=w_a(B+\lambda)-w_bC
=\lambda w_a + w_aB- w_b C,\\
D_v&:=w_b(A-\lambda)-w_a C
=-\lambda w_b + (w_b A- w_aC).
\end{align*}
\end{subequations} 
Suppose $D\neq 0$, then
$u=D_u/D$ and $v=D_v/D$. Substituting into \eqref{e:larg2-c} yields
\begin{subequations}
\begin{align*}
\Big[\lambda^2+(B- A)\lambda+ (C^2-AB)\Big]^2
&=\big(\lambda w_a + w_aB- w_bC\big)^2
-\big[\lambda w_b + (w_aC- w_b A)\big]^2\\
&=\Big( w_a^2-w_b^2\Big)\lambda^2
+2\Big(w_a^2 B+ w_b^2 A
-2 w_a w_b C\Big)\lambda\\
&\qquad+( w_a B- w_b C)^2
-( w_a C- w_b A)^2.
\end{align*}
\end{subequations}
By defining the constants accordingly, we rewrite the above identity as
\begin{equation*}
(\lambda^2+ R_1\lambda + R_2)^2= R_3\lambda^2 + 2 R_4\lambda + R_5.
\end{equation*}
Again, one can analyze this equation analogously to Section~\ref{s:prj_maxsl}. However, complication arises since there are possibly more than one positive $\lambda$'s. Instead, we expand
\begin{equation*}
\lambda^4+2R_1\lambda^3+(R_1^2+2R_2-R_3)\lambda^2
+2(R_1R_2-R_4)\lambda+R_2^2-R_5=0,
\end{equation*}
and use the classic Ferrari's method for quartic equation. Next, for each $\lambda>0$, we solve \eqref{e:uv3} as follows. 

\smallskip
{\em Case~2a:} $D\neq 0$. Then we simply compute
$u={D_u}/{D}$ and $v={D_v}/{D}$.

{\em Case~2b:} $D=0$. If either $D_u\neq0$ or $D_v\neq0$, then \eqref{e:uv3} has no solution $(u,v)$. So we now consider the remaining case that $D_u=D_v=0$. Then from \eqref{e:uv3} and \eqref{e:larg2-c}, we have
\begin{subequations}
\begin{align*}
Cu+(B+\lambda)v&= w_b,\\
v^2- u^2+1&=0.
\end{align*}
\end{subequations}
Since $B>0$ and $\lambda>0$, the first equation implies $\disp v={(w_b-Cu)}/{(B+\lambda)}$. So the second one becomes
$( w_b-Cu)^2-(B+\lambda)^2 u^2+(B+\lambda)^2=0$, i.e.,
\begin{equation}
\Big[C^2-(B+\lambda)^2\Big]u^2
-2w_bC u+ [w_b^2+(B+\lambda)^2]=0.\label{e:170316a}
\end{equation}
If the discriminant
$(w_bC)^2
-(C^2-(B+\lambda)^2)( w_b^2+(B+\lambda)^2)\geq 0$, then we obtain two solutions $u$. Since there is a unique pair $(u,v)$ with $u<0$, the quadratic equation \eqref{e:170316a} must yield one positive and one negative solutions
\begin{equation*}
u= \frac{w_bC\pm\sqrt{ (w_bC)^2
-(C^2-(B+\lambda)^2)( w_b^2+(B+\lambda)^2)}}
{C^2-(B+\lambda)^2},
\end{equation*}
and we must also have $C^2-(B+\lambda)^2<0$.
Therefore, the negative solution $u$ is
\begin{equation*}
u=\frac{w_bC+\sqrt{ (w_bC)^2
-(C^2-(B+\lambda)^2)( w_b^2+(B+\lambda)^2)}}
{C^2-(B+\lambda)^2}.
\end{equation*}

Next, among all $(u,v)$'s, choose the unique pair with $u<0$. Then rescale variables $u\longleftarrow su$. Finally, we obtain $(h_1,h_2,h_3)$ by \eqref{e:h3uv} and \eqref{e:h123uv}.

\section{Curvature Minimization}
\label{s:optim}

In some design problems, the designer may wish to construct a surface that is as ``smooth" as possible. This problem is referred to as minimizing the {\em curvature} between adjacent triangles in the mesh. In this section, we will address this problem.

\noindent
\begin{minipage}{.53\textwidth}
\begin{figure}[H]
\centering
\includegraphics[height=.8in]{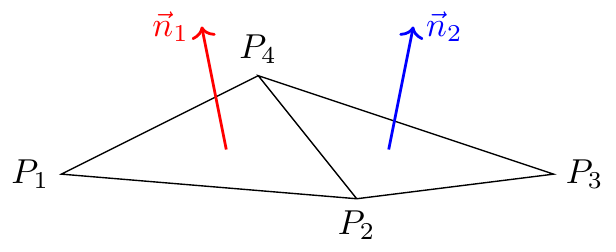}
\caption{Curvature between two triangles.}
\label{fg:ddn}
\end{figure}
\end{minipage}
\begin{minipage}{.44\textwidth}
Given the points $P_i=(p_{i1},p_{i2},h_i)$, $i=1,2,3,4$, so that they form two adjacent triangles $\Delta_1=P_1P_2P_4$ and $\Delta_2=P_2P_3P_4$ (see Figure~\ref{fg:ddn}).
\end{minipage}

Define \
$a_i:=p_{i1}-p_{41}$ and 
$b_i:=p_{i2}-p_{42}$, for $i=1,2,3$.
Then the respective normal vectors are
\begin{align*}
\vec{n}_{1}&=
\matrx{a_1\\ b_1\\ h_1-h_4}\times
\matrx{a_2\\ b_2\\ h_2-h_4}=
\matrx{-b_2h_1+b_1h_2+(b_2-b_1)h_4\\
a_2 h_1-a_1 h_2+(a_1-a_2)h_4\\
a_1b_2-a_2b_1},\\
\vec{n}_{2}&=
\matrx{a_2\\ b_2\\ h_2-h_4}\times
\matrx{a_3\\ b_3\\ h_3-h_4}=
\matrx{-b_3h_2+b_2h_3+(b_3-b_2)h_4\\
a_3 h_2-a_2 h_3+(a_2-a_3)h_4\\
a_2b_3-a_3b_2}.
\end{align*}
We rescale and obtain
\begin{align*}
\vec{n}_{1}&=
\bigg(\frac{-b_2h_1+b_1h_2+(b_2-b_1)h_4}{a_1b_2-a_2b_1},
\frac{a_2 h_1-a_1 h_2+(a_1-a_2)h_4}{a_1b_2-a_2b_1},
1\bigg),\\
\vec{n}_{2}&=
\Big(\frac{-b_3h_2+b_2h_3+(b_3-b_2)h_4}{a_2b_3-a_3b_2},
\frac{a_3 h_2-a_2 h_3+(a_2-a_3)h_4}{a_2b_3-a_3b_2},
1\Big).
\end{align*}
The curvature can be represented by the difference between $\vec{n}_1$ and $\vec{n}_2$, i.e.,
\begin{equation*}
\vec{\delta}_{12}:=\vec{\delta}_{\Delta_1\Delta_2}=\vec{n}_{1}-\vec{n}_{2}
=:\matrx{\scal{u}{h}\\ \scal{v}{h}},
\end{equation*}
where $h:=(h_1,h_2,h_3,h_4)$, $u:=(u_1,u_2,u_3,u_4)$, $v:=(v_1,v_2,v_3,v_4)$,
\begin{subequations}\label{e:uv1234}
\begin{alignat}{4}
u_1=-b_2/(a_1b_2-a_2b_1)\quad,
&&\quad u_2&=b_1/(a_1b_2-a_2b_1)+b_3/(a_2b_3-a_3b_2)\ , \\
u_3=-b_2/(a_2b_3-a_3b_2)\quad,
&&\quad u_4&=-(u_1+u_2+u_3)\ ,\\
v_1=a_2/(a_1b_2-a_2b_1)\quad,
&&\quad v_2&=-a_1/(a_1b_2-a_2b_1)-a_3/(a_2b_3-a_3b_2)\ , \\
v_3=a_2/(a_2b_3-a_3b_2)\quad,
&&\quad v_4&=-(v_1+v_2+v_3).
\end{alignat}
\end{subequations}
So for each pair $(\Delta_i,\Delta_j)$ of adjacent triangles, we find the corresponding vectors $h_{ij}=(h_1^{ij},h_2^{ij},h_3^{ij},h_4^{ij})\in\RR^4$, $u_{ij},v_{ij}\in\RR^4$, and compute the corresponding curvature
$\vec{\delta}_{ij}=\left(\scal{u_{ij}}{h_{ij}},\scal{v_{ij}}{h_{ij}}\right)$. We then aim to minimize all the curvatures between adjacent triangles. Thus, we arrive at the objective
\begin{equation*}
G_{1,*}(x):=\sum_{\text{all triangle pairs $(\Delta_i,\Delta_j)$}}\|\vec{\delta}_{ij}\|_*
\end{equation*}
where $\|\cdot\|_*$ can be either $1$-norm or max-norm in $\RR^2$. For $1$-norm, the objective is
\begin{equation}\label{e:170109a}
G_{1,1}(x)=\sum_{\text{all triangle pairs $(\Delta_i,\Delta_j)$}} |\scal{u_{ij}}{h_{ij}}|+|\scal{v_{ij}}{h_{ij}}|.
\end{equation}
For max-norm, the objective is
\begin{equation}\label{e:170109b}
G_{1,\infty}(x)=\sum_{\text{all triangle pairs $(\Delta_i,\Delta_j)$}} \max\{|\scal{u_{ij}}{h_{ij}}|,|\scal{v_{ij}}{h_{ij}}|\}.
\end{equation}

\begin{remark}[simplified computations for symmetric cases]
Suppose the two dimensional mesh satisfies the following symmetry: for every adjacent triangles $P_1P_2P_4$ and $P_2P_3P_4$, there exists $t\in\RR$ such that
\begin{equation*}
\overrightarrow{P_4P_1}+\overrightarrow{P_4P_3}=t\overrightarrow{P_4P_2}.
\end{equation*}
Then it follows that $(a_1,b_1)+(a_3,b_3)=t(a_2,b_2)$.
So we can deduce $a_1b_2-a_2b_1=a_2b_3-a_3b_2$. From \eqref{e:uv1234}, we have
\begin{equation*}
u=\frac{-b_2}{a_1b_2-a_2b_1}\big(1,-t,1,t-2\big)
\ \ \text{and}\ \
v=\frac{a_2}{a_1b_2-a_2b_1}\big(1,-t,1,t-2\big),
\end{equation*}
i.e., $u$ and $v$ are parallel. This simplifies the computations for \eqref{e:170109a} and \eqref{e:170109b}.
\end{remark}

Since our optimization methods require proximity operators, we will derive the necessary formulas. It is sometimes convenient to compute the proximity operator $\prox_f$ via the proximity operator of its {\em Fenchel conjugate} $f^*$, which is defined by
$f^*:X\to\RR: x^*\mapsto\sup_{x\in X}\big(\scal{x^*}{x}-f(x)\big)$. Indeed, if $\gamma>0$, then (see, e.g., \cite[Theorem~14.3(ii)]{BC2017})
\begin{equation*}
\forall x\in X:\quad
x=\prox_{\gamma f}(x)+\gamma\prox_{\gamma^{-1}f^*}(\gamma^{-1}x).
\end{equation*}
We also recall a useful formula from \cite[Lemma~2.3]{BKP16}: if $f:X\to\RR$ is convex and positively homogeneous, $\alpha>0$, $w\in X$, and
$h:X\to\RR:x\mapsto \alpha f(x-w)$, then
\begin{equation*}
\prox_{h}(x)= w+\alpha\prox_f\Big(\frac{x-w}{\alpha}\Big)
=x-\alpha\prox_{f^*}\Big(\frac{x-w}{\alpha}\Big).
\end{equation*}

\begin{theorem}\label{t:proxmax}
Let $\{u_i\}_{i\in I}$ be a system of finitely many vectors in $\RR^n$, and
\begin{equation*}
f:\RR^n\to\RR:x\to \max_{i\in I}\{|\scal{u_i}{x}|\}.
\end{equation*}
Then $f^*=\iota_{D}$ where
$D:=\conv\bigcup_{i\in I}\{u_i,-u_i\}$. Consequently, $\prox_{f}=\Id -\proj_D$.
\end{theorem}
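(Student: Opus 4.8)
The plan is to compute $f^*$ directly from the definition of the Fenchel conjugate and then invoke the already-stated decomposition formula $x = \prox_{\gamma f}(x) + \gamma\prox_{\gamma^{-1}f^*}(\gamma^{-1}x)$ with $\gamma = 1$. The key observation is that $f$ is the support function of the set $D := \conv\bigcup_{i\in I}\{u_i,-u_i\}$: indeed, $\max_{i\in I}|\scal{u_i}{x}| = \max_{i\in I}\max\{\scal{u_i}{x},\scal{-u_i}{x}\} = \max_{d\in \bigcup_{i\in I}\{u_i,-u_i\}}\scal{d}{x} = \sup_{d\in D}\scal{d}{x}$, where the last equality holds because a linear functional attains its maximum over a (finite, hence compact) convex hull at an extreme point, and all extreme points of $D$ lie in the finite generating set. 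So $f = \sigma_D$, the support function of $D$.

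First I would record that $D$ is nonempty, compact, and convex (it is the convex hull of a finite set of points). Then I would use the standard fact from convex analysis — e.g.\ \cite[Example~13.2(ii) or Theorem~13.2]{BC2017} — that the Fenchel conjugate of the support function $\sigma_D$ of a nonempty closed convex set $D$ is exactly $\iota_D$. (If one prefers a self-contained argument: $f^*(x^*) = \sup_x(\scal{x^*}{x} - \sigma_D(x))$; this supremum is $0$ attained at $x=0$ when $x^*\in D$, since $\scal{x^*}{x}\le \sigma_D(x)$ for all $x$ by definition of the support function; and it is $+\infty$ when $x^*\notin D$, because by the separation theorem there is $x$ with $\scal{x^*}{x} > \sigma_D(x)$, and then scaling $x$ drives the expression to $+\infty$.) This gives $f^* = \iota_D$.

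Finally, I would plug $f^* = \iota_D$ into the conjugate decomposition formula with $\gamma = 1$: for all $x$, $x = \prox_f(x) + \prox_{f^*}(x) = \prox_f(x) + \prox_{\iota_D}(x) = \prox_f(x) + \proj_D(x)$, where the last step uses that $\prox_{\iota_D} = \proj_D$ (noted in the excerpt). Rearranging yields $\prox_f = \Id - \proj_D$, which is the claim.

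The only real subtlety — not a deep one — is justifying that the support function of the finite point set $\bigcup_{i\in I}\{u_i,-u_i\}$ equals the support function of its convex hull $D$; this is the elementary fact that maximizing a linear functional over a polytope is achieved at a vertex. Everything else is a direct application of the two boxed identities already supplied in the excerpt, so I expect no genuine obstacle.
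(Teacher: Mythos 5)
Your proposal is correct and takes essentially the same route as the paper: the paper establishes $f^*=\iota_D$ directly from the definition (the case $x^*\in D$ via the representation $x^*=\sum_i\lambda_iu_i$ with $\sum_i|\lambda_i|\le 1$, and the case $x^*\notin D$ via separation plus positive homogeneity — exactly your self-contained fallback argument), and then the ``consequently'' follows from the Moreau decomposition $x=\prox_f(x)+\prox_{f^*}(x)$ together with $\prox_{\iota_D}=\proj_D$, just as you do. Your framing of $f$ as the support function $\sigma_D$ of the polytope $D$, which lets you cite the standard conjugacy $\sigma_D^*=\iota_D$, is only a cosmetic repackaging of the same computation.
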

\proof Suppose $x^*\in D$, then we can express
\begin{equation*}
x^*=\sum_{i\in I}\lambda_i u_i
\quad\text{where}\quad
\sum_{i\in I}|\lambda_i|\leq 1.
\end{equation*}
It follows that for all $x\in X$,
\begin{equation*}
\scal{x^*}{x}-f(x)=\sum_{i\in I}\lambda_i \scal{u_i}{x}-\max_{i\in I}|\scal{u_i}{x}|
\leq\Big(\sum_{i\in I}\lambda_i-1\Big)\max_{i\in I}|\scal{u_i}{x}|\leq 0.
\end{equation*}
So, $\disp f^*(x^*)=\sup_{x\in X}\big[\scal{x^*}{x}-f(x)\big]\leq0$. Notice that equality happens if we set $x=0$. Thus, $f^*(x^*)=0$.

Now suppose $x^*\not\in D$. Since $D$ is nonempty, closed, and convex, the classic separation theorem implies that there exists $x\in X$ such that
\begin{equation*}
\scal{x^*}{x}>\scal{u}{x}\quad\text{for all}\quad u\in D.
\end{equation*}
This leads to $\scal{x^*}{x}-f(x)>0$. Since $f$ is homogeneous, we have 
\begin{equation*}
f^*(x^*)\geq \scal{x^*}{\lambda x}-f(\lambda x)\to +\infty
\quad\text{as}\quad \lambda\to+\infty.
\end{equation*}
So $f^*(x^*)=+\infty$. Therefore, we can conclude that $f^*=\iota_D$.
\endproof

Finally, Examples~\ref{ex:seg} and \ref{ex:parlgrm} provide the necessary formulas to compute the proximity operators of the objectives in \eqref{e:170109a} and \eqref{e:170109b}, respectively.

\begin{example}
\label{ex:seg}
Given $\alpha>0$, a vector $u\in\RR^n\smallsetminus\{0\}$ and the function
\begin{equation*}
f:\RR^n\to\RR:x\mapsto\alpha|\scal{u}{x}|.
\end{equation*}
By Theorem~\ref{t:proxmax}, the proximity operator of $f$ is 
\begin{equation*}
\prox_f=\Id-\proj_D
\ \text{where}\ D:=[-\alpha u,\alpha u].
\end{equation*}
The explicit projection onto $D$ is given by (see \cite[Theorem~2.7]{BKP16})
\begin{equation*}
\proj_D x=
\min\Big\{1,\max\Big\{-1,\frac{\scal{\alpha u}{x}}{\|\alpha u\|^2}\Big\}\Big\}\alpha u
=\min\Big\{\alpha,\max\Big\{-\alpha,\frac{\scal{ u}{x}}{\|u\|^2}\Big\}\Big\}u.
\end{equation*}
\end{example}

\begin{example}
\label{ex:parlgrm}
Given two vectors $u,v\in\RR^n$ and
\begin{equation*}
f(x):\RR^n\to\RR:x\mapsto\max\{|\scal{u}{x}|,|\scal{v}{x}|\}.
\end{equation*}
By Theorem~\ref{t:proxmax}, the proximity operator of $f$ is 
\begin{equation*}
\prox_f=\Id-\proj_D
\ \text{where}\ 
D:=\conv\{u,v,-u,-v\}.
\end{equation*}
In general, $\prox_D$ is the projection onto a parallelogram in $\RR^n$.
\end{example}

\section{Experiments}
\label{s:experiment}

With the formulas for proximity and projection operators, we are ready to apply iterative methods to solve feasibility and optimization problems. In particular, we present an application to the civil engineer problem in Section~\ref{ss:civil} which is part of our motivation.

{\em Experiment setup:} In each of the three problems outlined in Figure~\ref{f:parking}, we aim to minimize the surface curvature using the objective function \eqref{e:170109a}, and subject to the requirements that: the maximum slope of all triangles does not exceed $4\%$; each triangle must incline toward its closest drain line (marked in blue) with minimum slope of $0.5\%$; and the triangle edges (marked in red) must be aligned. The DR algorithm \eqref{e:dr} will be used and it will stop when distance between two consecutive {\em governing} iterations are less than the tolerance $\varepsilon=0.001$ and the {\em monitored} iteration meets all constrained with (same) tolerance $\varepsilon$.

{\em Results:} In Figures~\ref{fig:parkingA}, \ref{fig:parkingB}, and \ref{fig:roundabout}, we show the solutions after various iterations of the DR algorithm. Triangles colored in red violate the design constraints (maximum and/or minimum slopes), whereas triangles in green satisfy the constraints. Below the surfaces are the contours, which become more regular with increasing iterations due to curvature minimization objective.

\begin{figure}[!htb]
    \centering
    \begin{subfigure}[b]{0.47\textwidth}
        \includegraphics[width=\textwidth]{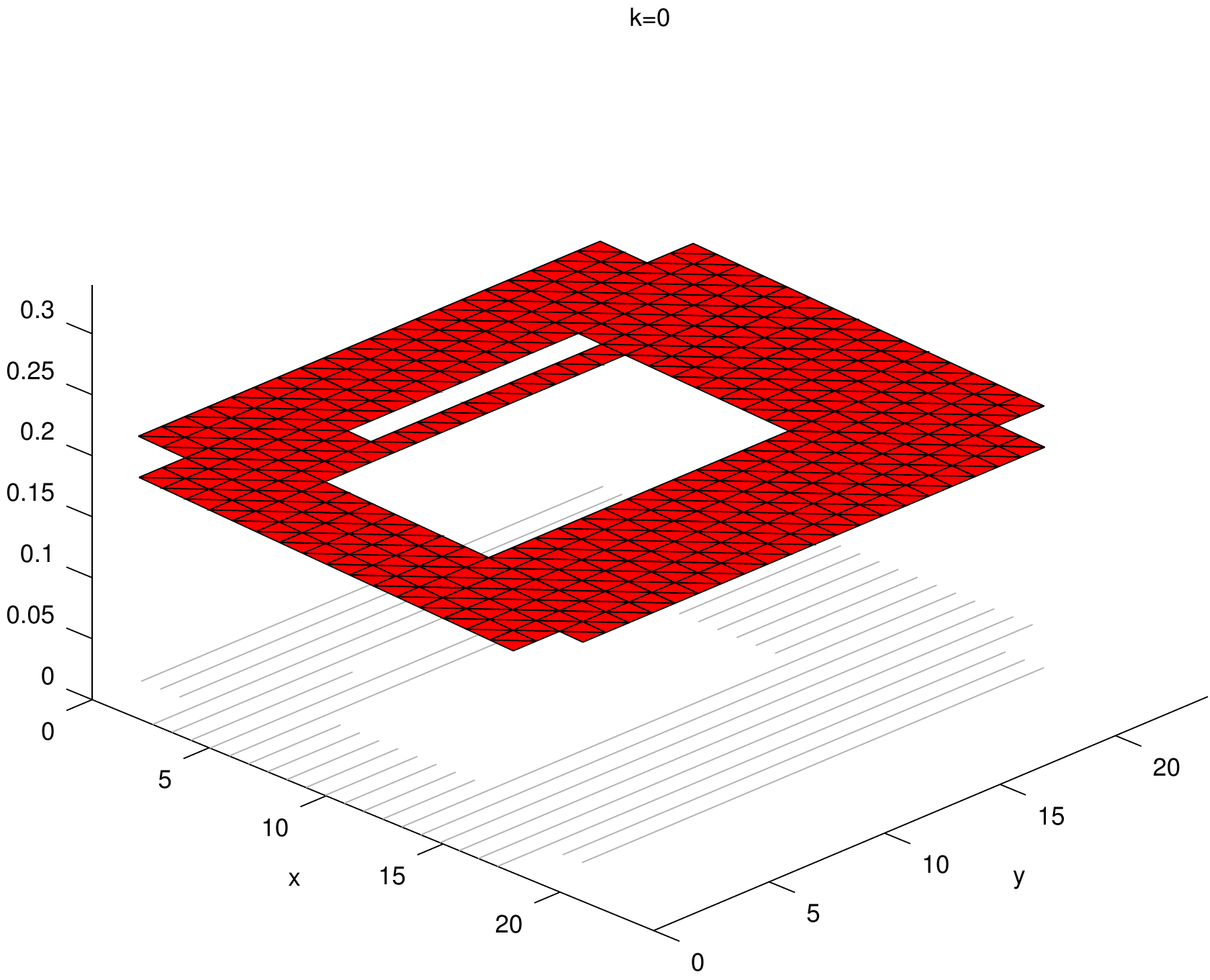}
        \caption{Starting conditions.}
        \label{fig:pAstart}
    \end{subfigure}
    ~ 
    \begin{subfigure}[b]{0.47\textwidth}
        \includegraphics[width=\textwidth]{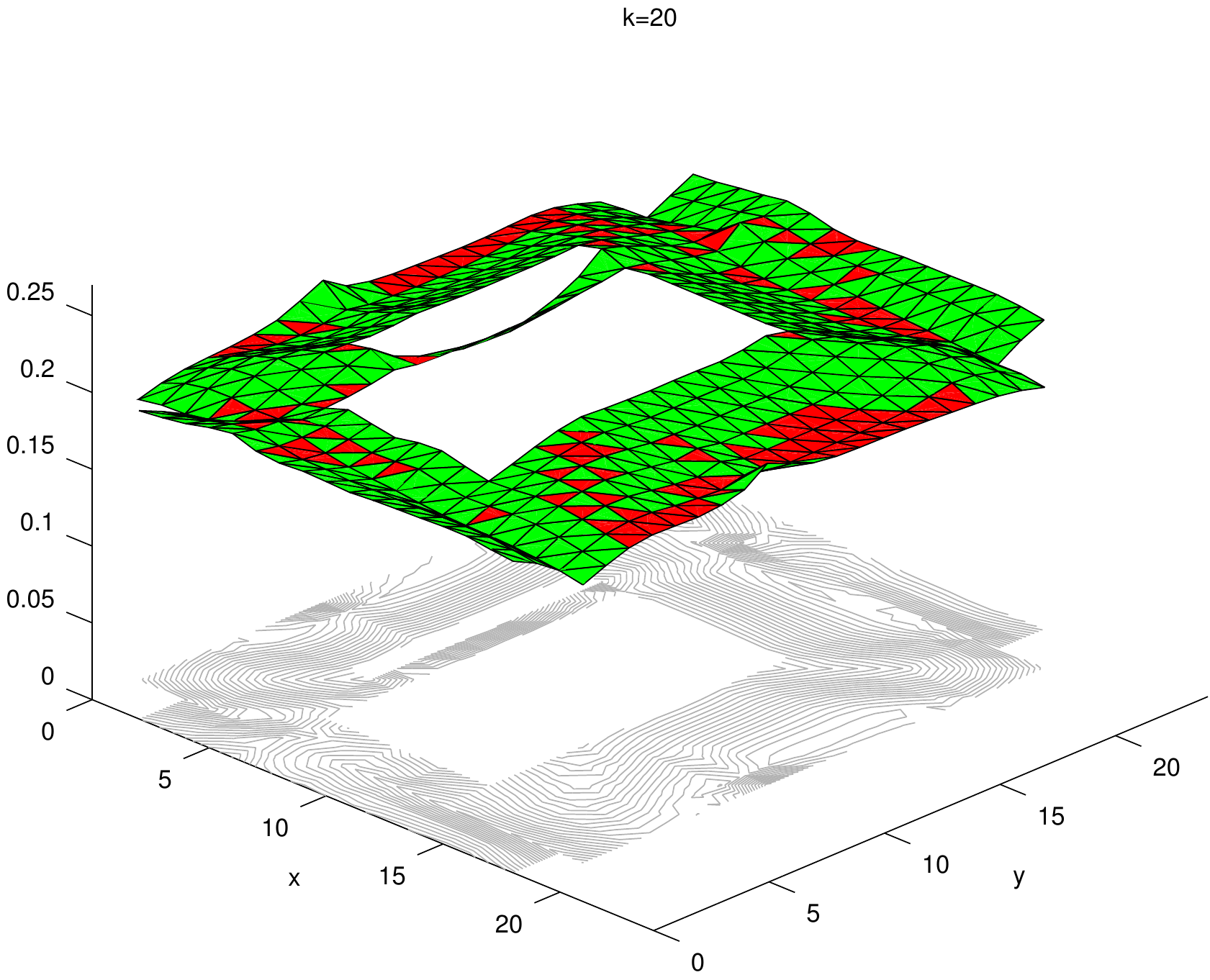}
        \caption{At $k=20$}
        \label{fig:pA10}
    \end{subfigure}
    
    \begin{subfigure}[b]{0.47\textwidth}
        \includegraphics[width=\textwidth]{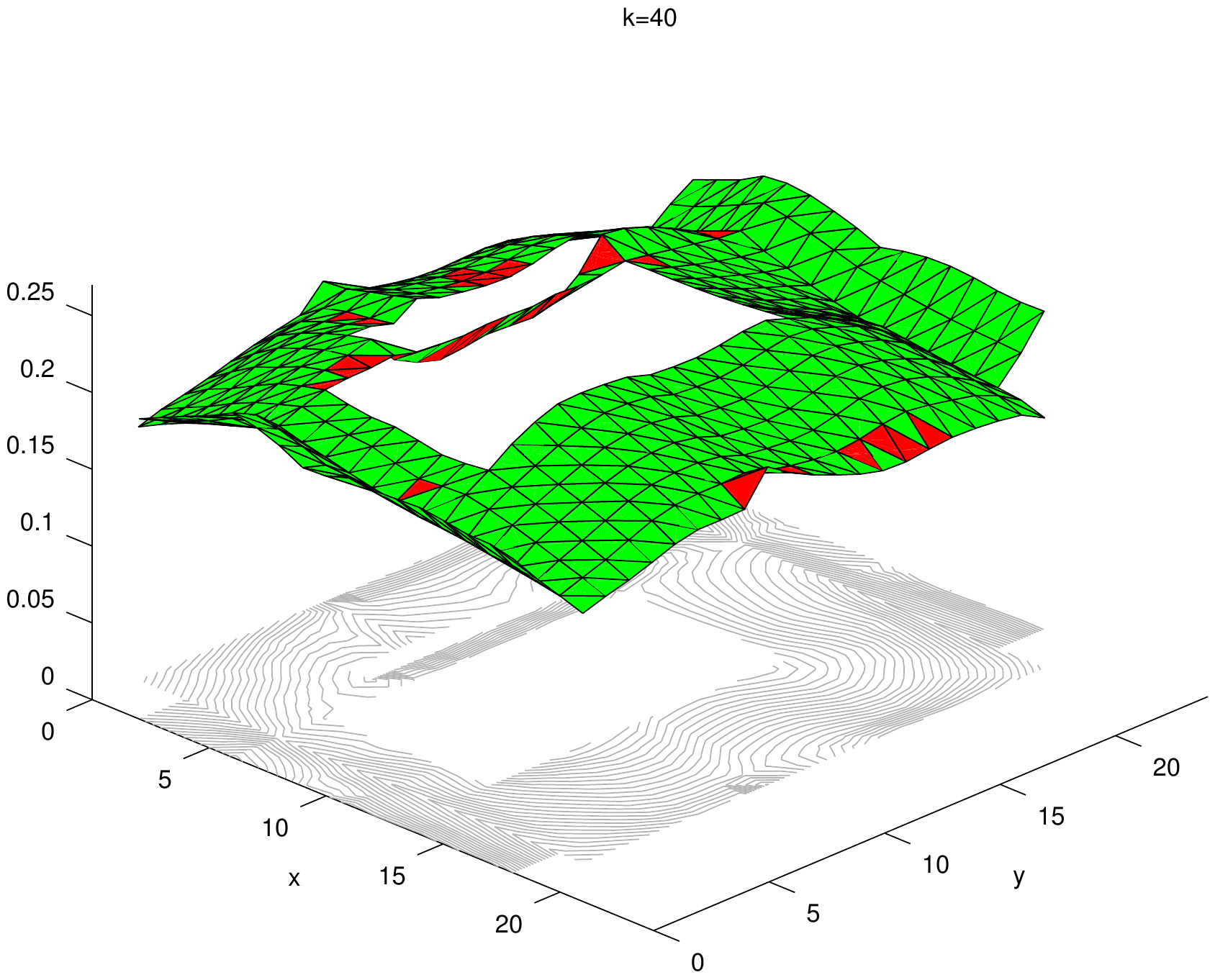}
        \caption{At $k=40$}
        \label{fig:pA20}
    \end{subfigure}
    ~ 
    \begin{subfigure}[b]{0.47\textwidth}
        \includegraphics[width=\textwidth]{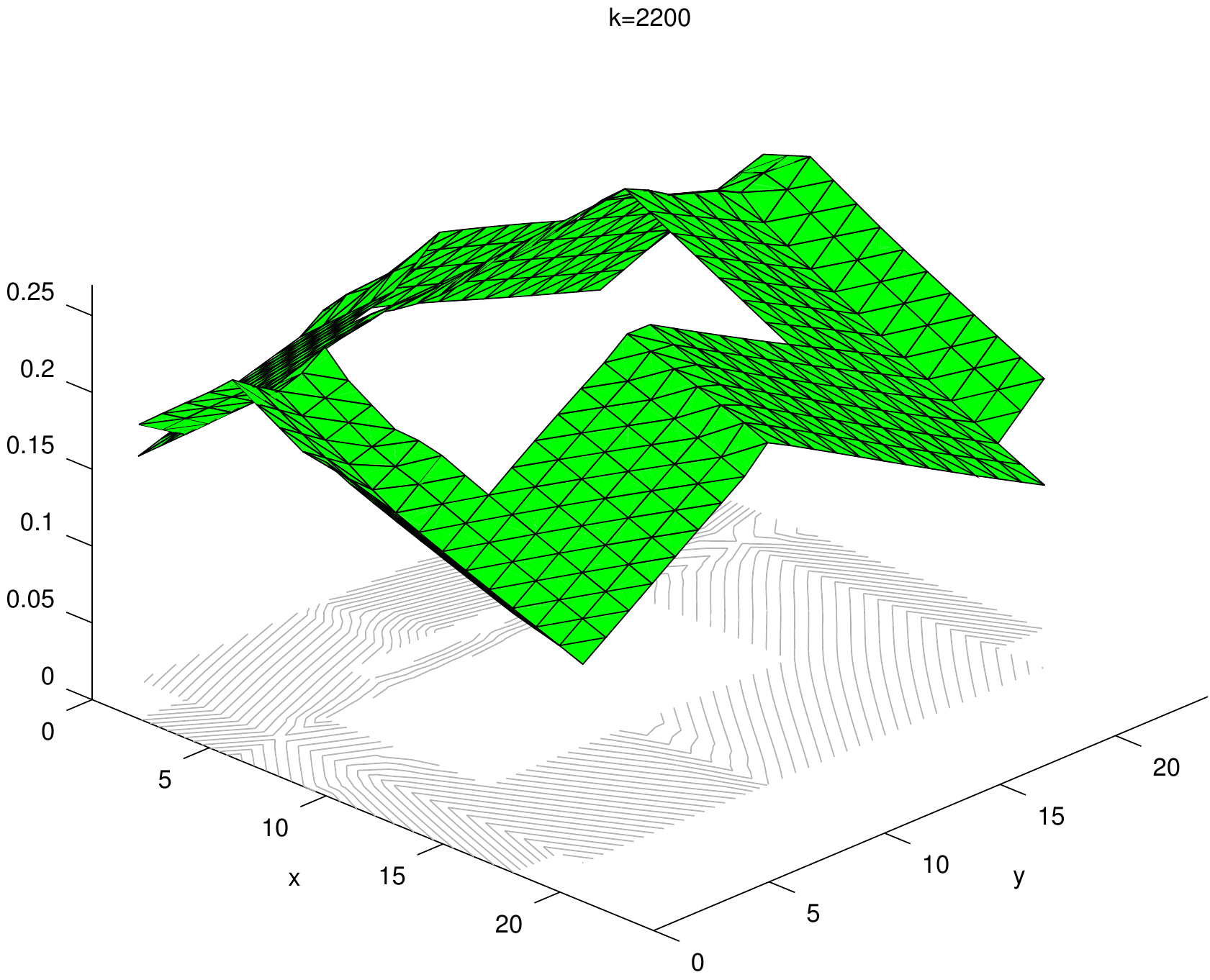}
        \caption{Final solution.}
        \label{fig:pAfinal}
    \end{subfigure}
    \caption{Grading design for a parking lot with corner drainage.}\label{fig:parkingA}
\end{figure}

\begin{figure}[!htb]
    \centering
    \begin{subfigure}[b]{0.47\textwidth}
        \includegraphics[width=\textwidth]{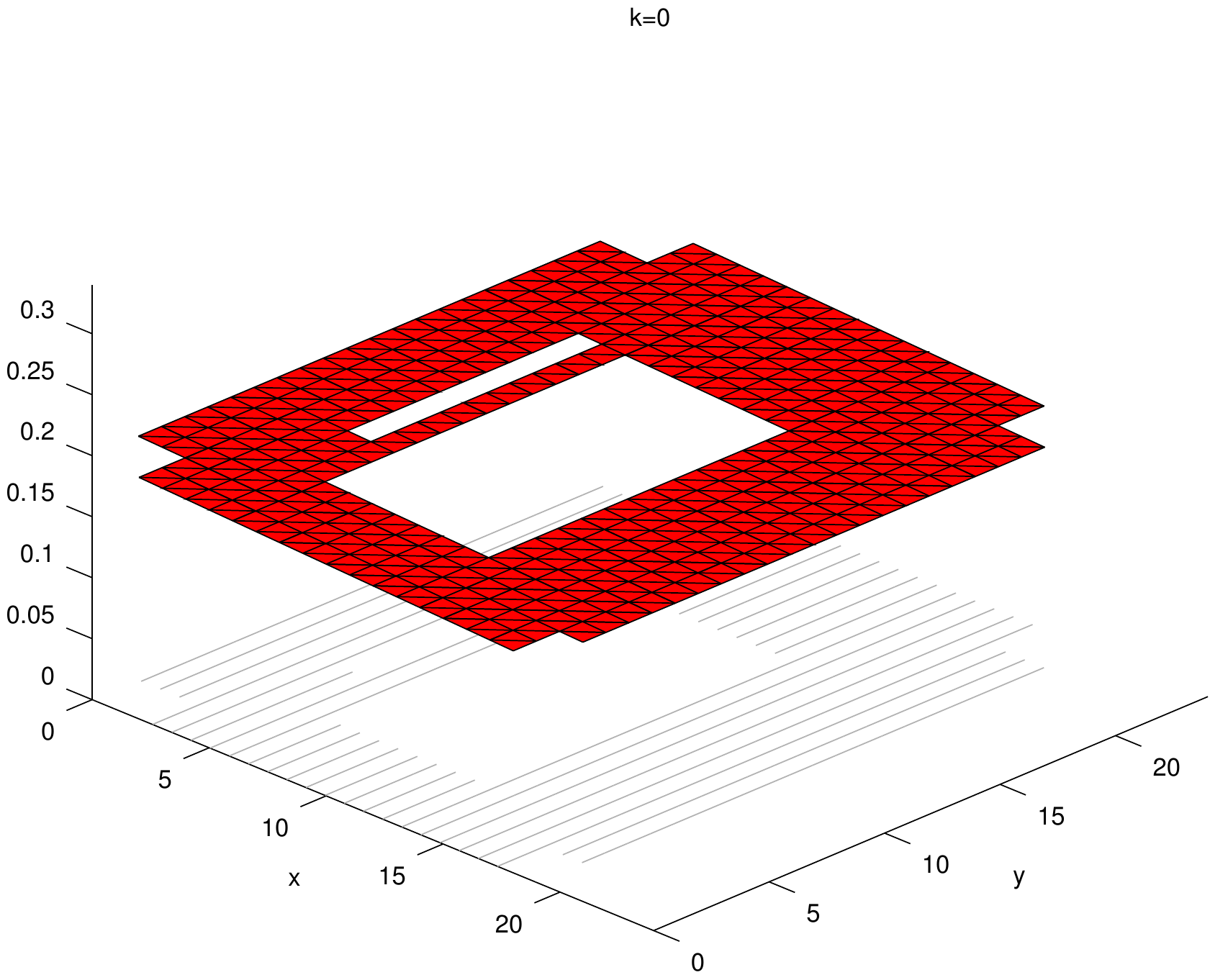}
        \caption{Starting conditions.}
        \label{fig:pBstart}
    \end{subfigure}
    ~ 
    \begin{subfigure}[b]{0.47\textwidth}
        \includegraphics[width=\textwidth]{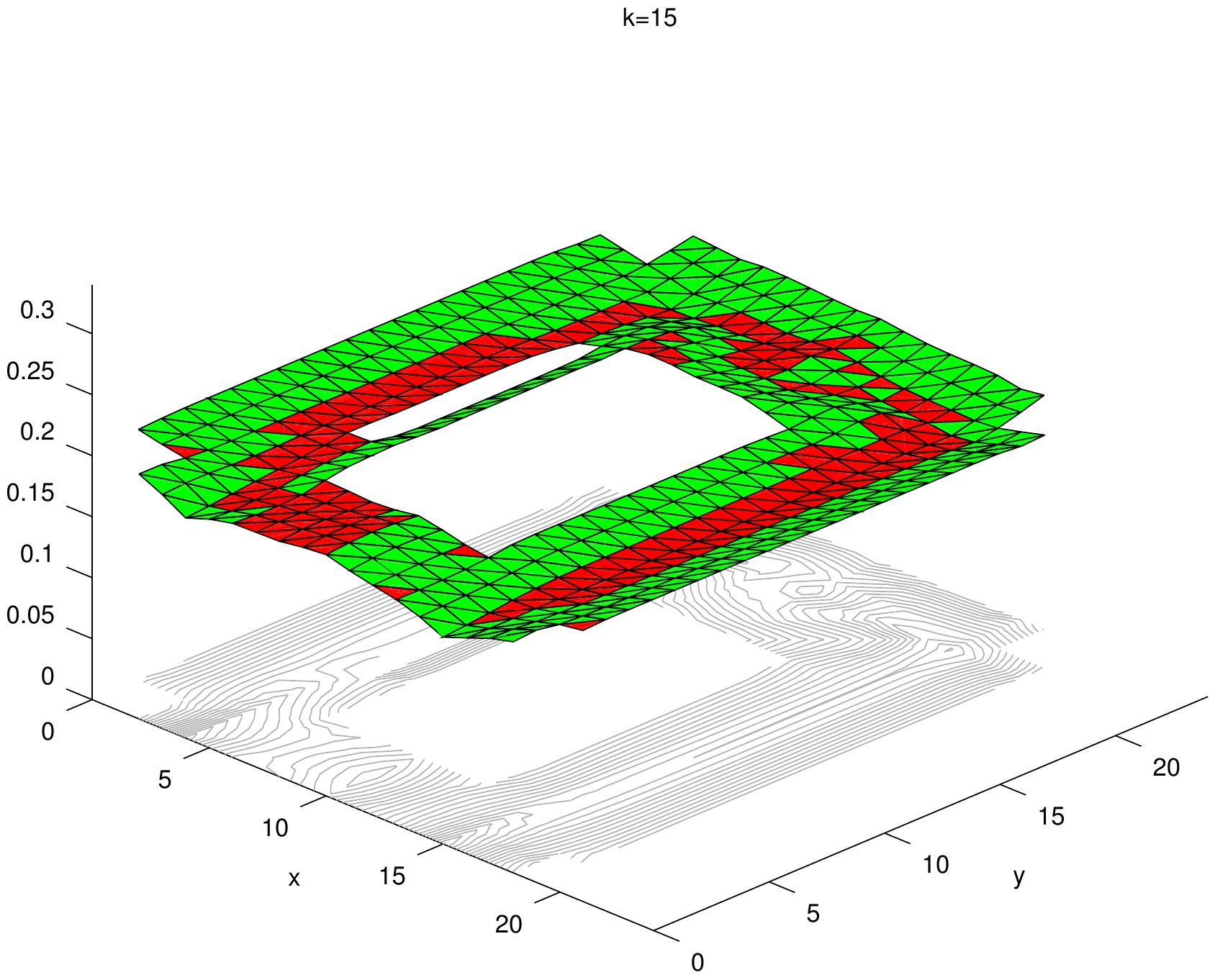}
        \caption{At $k=15$}
        \label{fig:pB10}
    \end{subfigure}
    
    \begin{subfigure}[b]{0.47\textwidth}
        \includegraphics[width=\textwidth]{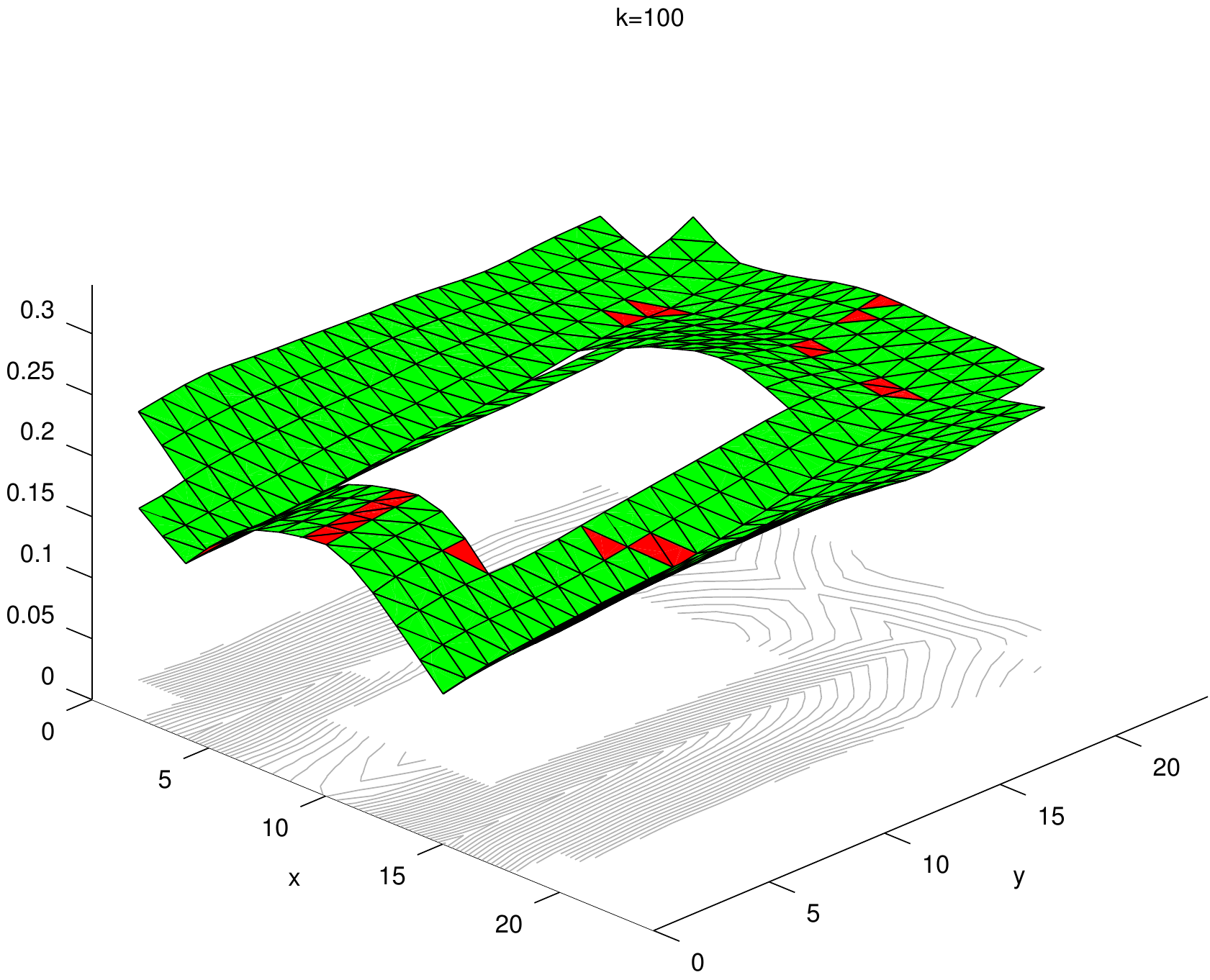}
        \caption{At $k=100$}
        \label{fig:pB20}
    \end{subfigure}
    ~ 
    \begin{subfigure}[b]{0.47\textwidth}
        \includegraphics[width=\textwidth]{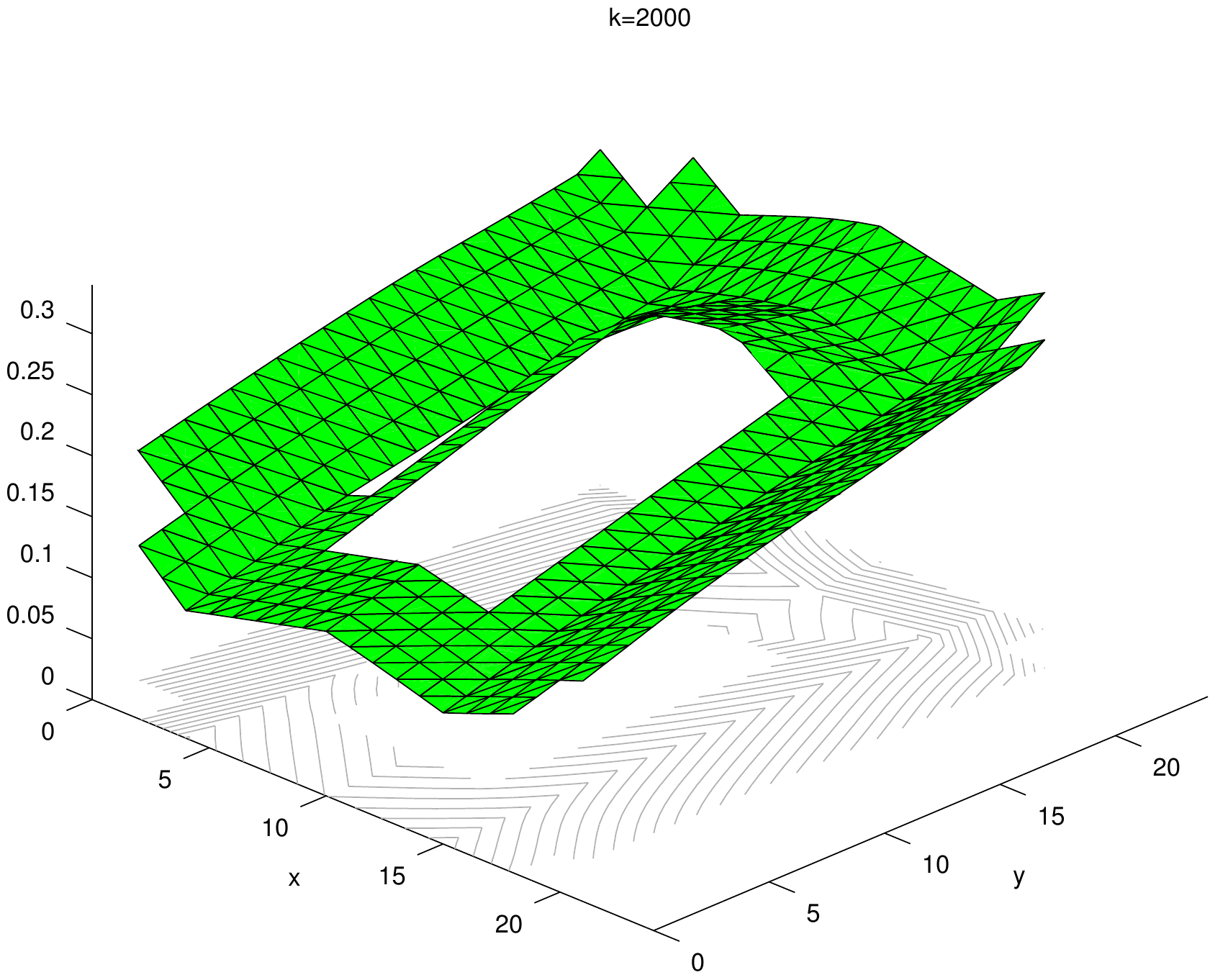}
        \caption{Final solution.}
        \label{fig:pBfinal}
    \end{subfigure}
    \caption{Grading design for a parking lot with side drainage.}\label{fig:parkingB}
\end{figure}

\begin{figure}[!htb]
    \centering
    \begin{subfigure}[b]{0.47\textwidth}
        \includegraphics[width=\textwidth]{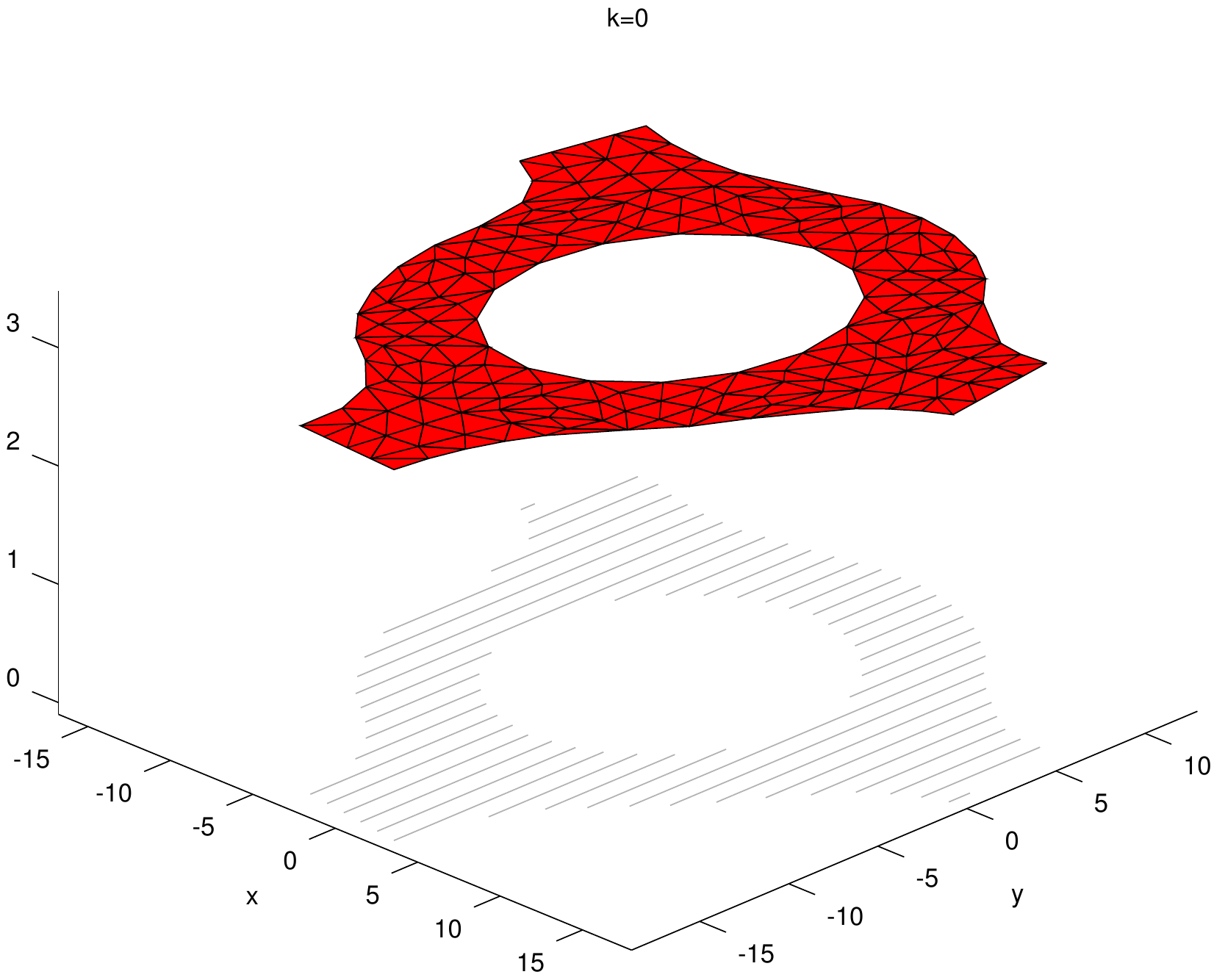}
        \caption{Starting conditions.}
        \label{fig:rstart}
    \end{subfigure}
    ~ 
    \begin{subfigure}[b]{0.47\textwidth}
        \includegraphics[width=\textwidth]{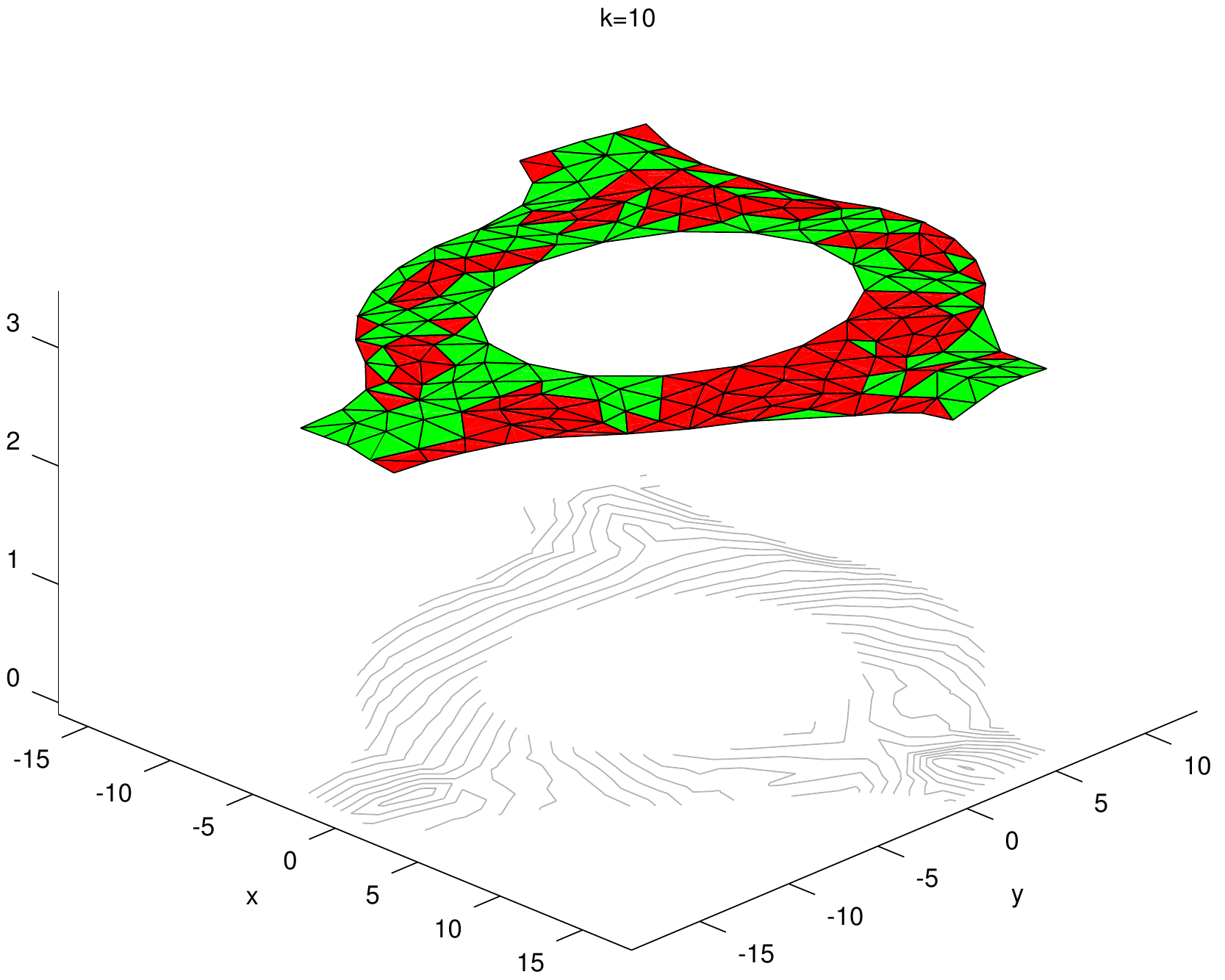}
        \caption{At $k=10$}
        \label{fig:r10}
    \end{subfigure}
    
    \begin{subfigure}[b]{0.47\textwidth}
        \includegraphics[width=\textwidth]{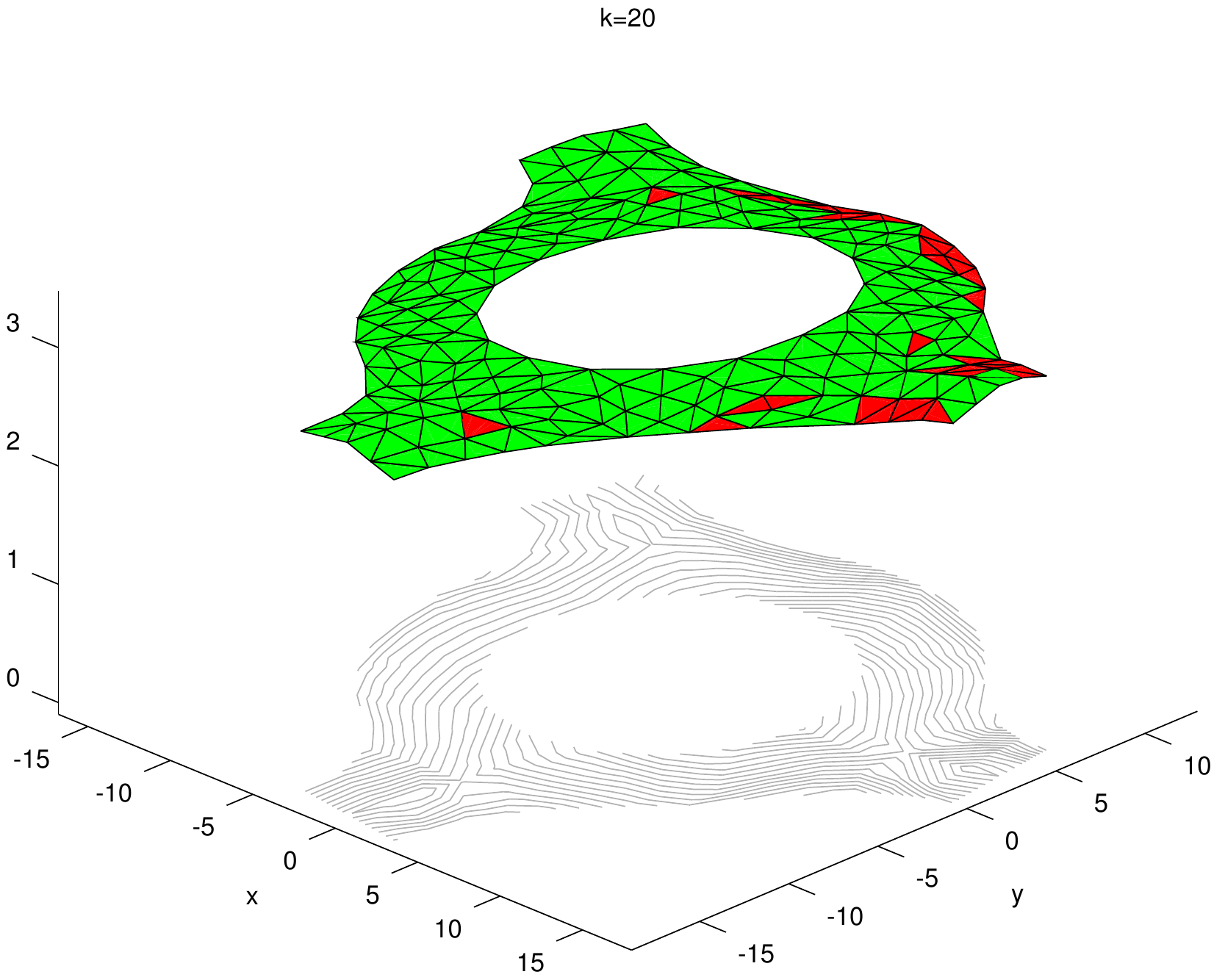}
        \caption{At $k=20$}
        \label{fig:r20}
    \end{subfigure}
    ~ 
    \begin{subfigure}[b]{0.47\textwidth}
        \includegraphics[width=\textwidth]{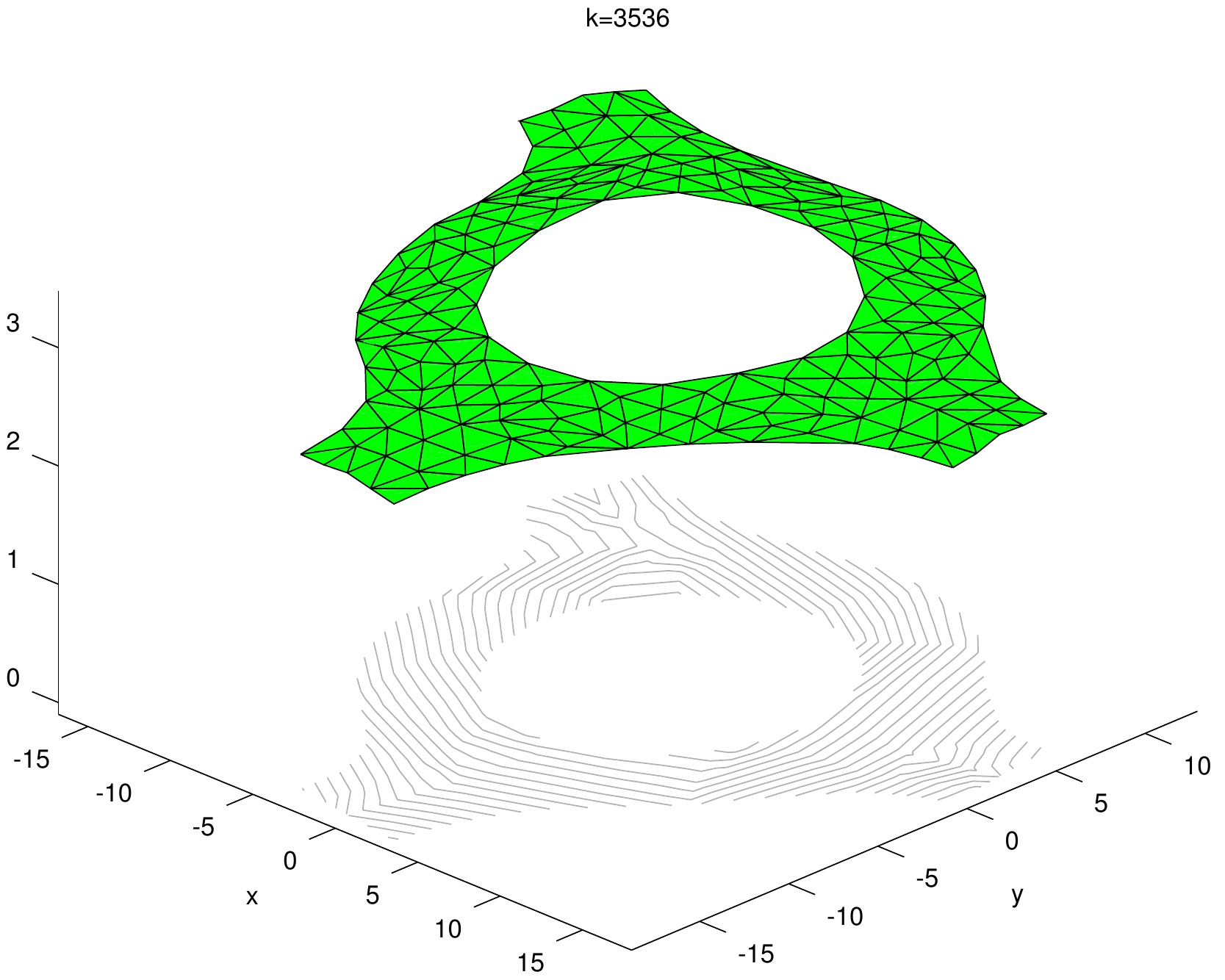}
        \caption{Final solution.}
        \label{fig:rfinal}
    \end{subfigure}
    \caption{Grading design for a roundabout.}\label{fig:roundabout}
\end{figure}

\section{Conclusion}
\label{s:conclus}
The manipulation of triangle meshes has many applications in computer graphics and computer-aided design. The paper presents a general framework for triangular design problems with spatial constraints. In particular, we model several important constraints and costs in suitable forms so that projection and proximity operators can be computed explicitly. With the help of iterative splitting methods, we are able to solve some complex design problems on these triangular meshes. Therefore, modeling constraints and their proximity operators, and using them in modern first-order optimization methods can be a successful approach to solve large-scale problems in industry and science.

\subsection*{Acknowledgement}

This research is partially supported by Autodesk, Inc. The authors are grateful to the Editors and two anonymous referees for their constructive suggestions that allow us to improve the original presentation.

\end{document}